\patchcmd{\thebibliography}{%
   \chapter*{\bibname}\@mkboth{\MakeUppercase\bibname}{\MakeUppercase\bibname}}{%
      \section*{References}}{}{}
\title{Strong convergence of a positive preserving drift-implicit Euler scheme for the fixed delay CIR process}
\author{Federico Flore\thanks{Dipartimento di Economia Aziendale - Universit\`{a} degli Studi Roma Tre, Via Silvio D'Amico 77, 00145 Rome, Italy.} \and Giovanna
Nappo\thanks{Dipartimento di Matematica - Universit\`{a} di Roma {``}Sapienza", Piazzale A. Moro 5, 00185 Rome, Italy.}}
\theoremstyle{plain}
\newtheorem{theorem}{Theorem}[section]
\newtheorem{lemma}[theorem]{Lemma}
\newtheorem{corollary}[theorem]{Corollary}
\newtheorem{proposition}[theorem]{Proposition}
\theoremstyle{definition}
\newtheorem{assumptions}[theorem]{Assumptions}
\theoremstyle{plain}
\newtheorem{remark}{Remark}
\begin{document}
\maketitle
\begin{abstract}
In this paper, we consider a fixed delay CIR process on the regime where it doesn't hit zero, the aim is to determine a positive preserving implicit Euler Scheme. On a time grid with stepsize $\Delta$ our scheme extends the scheme  proposed in~\cite{alfonsi2005discretization} for the classical CIR model.   Furthermore,  we consider its piecewise linear interpolation, and, under suitable conditions, we establish  the order of strong convergence in the uniform norm, thus extending the results in~\cite{dereich2011euler}.
\end{abstract}
\section{Introduction}\label{sec:introEA}
Stochastic differential equations (SDEs)  and stochastic delay differential equations (SDDEs) arise naturally in the modeling of realistic physical, biological systems, as well as financial and actuarial systems. In general, though one can prove existence, uniqueness and other properties of the solutions, no  explicit solutions of these equations are available,  and numerical approximation schemes are needed.

In this paper, our aim is to present a positive preserving discretization scheme strongly convergent to
 a fixed delay CIR process $X^{(b)}(t)$, defined by
\begin{align}
 X^{(b)}(t)&=X_0(t) \qquad t_0-\tau\leq t \leq t_0,\label{eq:CIRdelayX-InitialSegment} \\
 dX^{(b)}(t)&=[a(\gamma(t)-X^{(b)}(t))+bX^{(b)}(t-\tau)]dt+\sigma\sqrt{X^{(b)}(t)}dW(t), \quad t>t_0\label{eq:CIRdelayX}
\end{align}
where the parameters $a$ and $\sigma$ are positive constants, the parameter $b$ is a nonnegative constant, $\gamma(t)$ is a positive deterministic measurable function, and the initial segment process $X_0(t)$, $t\in [t_0-\tau,t_0]$,  is a random positive process.
In~\cite{FloreNappo_Delay}, the authors prove that, under suitable hypotheses (see Assumptions~\ref{ass:hp_EA}), Eq.~\eqref{eq:CIRdelayX} with initial segment process~\eqref{eq:CIRdelayX-InitialSegment}   admits a unique nonnegative solution, and, under the generalized Feller condition
 \begin{equation*}\label{eq:FellerCond}
\sigma^2\leq 2 a \gamma(t)\quad\text{for all $t\geq t_0$},
\end{equation*}
 the solution is positive,
 i.e.,~$\mathbb{P}\left(X^{(b)}(t)>0\text{ for $t\geq t_0$}\right)=1$.
Note that when $b=0$ and the function $\gamma(t)=\gamma$ is constant then the process $X^{(0)}(t)$ coincide with the classical CIR model. \\

 The  fixed delay CIR process, as well as the classical CIR process, can be used to model random intensity process for Cox processes, and therefore to model default/death random times
 $$\mathcal{T}= \inf\{s\geq t_0: \; \textstyle{\int_{t_0}^s X^{(b)}(u)\,du \geq E}\},$$
  where $E$ is an exponential random variable, with parameter $1$, independent of the process $X^{(b)}(\cdot)$, and then  $\mathbb{P}[\mathcal{T}> T|\mathcal{F}_t]=\mathbb{E}[e^{-\int_t^T X^{(b)}(u)\,du}|\mathcal{F}_t]$, on $\{   \mathcal{T}>t\}$.
   To this end the property that  $X^{(b)}(t)$ is positive for all $t\geq t_0$,   is crucial. The latter positivity  property is also fundamental to use it as a model of a random volatility process. Clearly it can also be used as a model of a random interest rate process under the risk neutral probability measure (though in this case the positivity property is not crucial);  also in this case it is important to compute~$\mathbb{E}[e^{-\int_t^T X^{(b)}(u)\,du}|\mathcal{F}_t]$, i.e., the zero coupon bond price.  It is well-known (see, e.g., \cite{lamberton_lapeyre}) that the  classical CIR model $X^{(0)}(t)$ is an affine process and this computation is explicitly determined as $\exp\{-\psi(t,T) -\phi(t,T) X^{(0)}(t)\}$, where $\psi(t,T)$ and $\phi(t,T)$ are deterministic positive functions. In~\cite{FloreNappo_Delay}, the authors prove that a similar result holds for the fixed delay process:
$$
\mathbb{E}[e^{-\int_t^T X^{(b)}(u)\,du}|\mathcal{F}_t]= e^{-a \int_t^T\gamma(u)\,\alpha(u,T)\, du - \alpha(t,T) X^{(b)}(t)- Y(t,T)},
$$
where $\alpha(t,T)$ is the (positive) solution of a deterministic delay differential equation, and
$$
Y(t,T)=\int_{[t_0-\tau, t] \cap [t-\tau, T-\tau]} b\,\alpha(u+\tau,T)\, X^{(b)}(u)\,du.
$$
It is then clear why it is important to find positive preserving approximations especially in the first two examples of applications: random default/death times and stochastic volatility.\\

The literature on weak and strong convergence of numerical approximation schemes for SDEs and SDDEs is huge. Limiting  to SDDEs we suggest, among others \cite{Kushner-book-1977,Kushner-2005,Kushner-2006,Kushner-book-2008,Kushner-2011}, \cite{Kuchler-Platen-2000, Kuchler-Platen-2002}, \cite{Mao-2003},   \cite{Chang-2008}, \cite{Wu-Mao-Chen-2009}, \cite{fischer2009moments}, \cite{Huang-2014}, \cite{Zhang-2018}, and the literature therein. Due to the diffusion coefficient $\sigma \sqrt{x}$, the fixed delay model does not fit the conditions needed in the quoted literature concerning strong convergence, though one could use the truncated Euler scheme analyzed by ~\cite{deelstra_delbaen} for the class of processes with stochastic drift term
satisfying the stochastic differential equation
\[dX(t)=(2\beta X(t)+\delta(t))dt+g(X(t))dW(t),\]
where $\beta$ is a negative real value, $\delta$ is a nonnegative adapted process such that
\[\int_{t_0}^t\delta(u)\,du<+\infty\quad\text{a.s.},\]
and $g(\cdot)$ is a H\"{o}lder continuous function vanishing at zero such that
\[|g(x)-g(y)|\leq K\sqrt{|x-y|}.\]
Indeed, setting
 \[\beta=-\frac{a}{2},\quad \delta(t)=  a\,\gamma(t)+b X(t-\tau),\quad\text{and}\quad g(x)=\sigma \sqrt{x},\]
 we recover the fixed delay CIR process, but, as usual in Euler truncated schemes, the approximating process assume negative values with positive probability, so that, this scheme is not positive preserving.
\\

To our knowledge strong convergence of positive preserving discretization schemes for SDDE with such a kind of diffusion coefficient have not been analyzed in  the literature, while this is the case for some classes of SDE:

In~\cite{bossy2007efficient} and~\cite{berkaoui2008euler}, the authors consider  the following stochastic differential equation
\[
dX(t)=b(X(t))dt+\sigma|X(t)|^\alpha dW(t),\\
\]
where $b(\cdot)$ is a Lipschitz function such that $b(0)>0$, $\sigma$ is a positive constant, $\alpha\in[\frac{1}{2},1)$ and the initial value~$X(t_0)=x\geq 0$, and study a symmetrized Euler scheme defined by
\[\begin{cases}
\overline{X}_0&=X(t_0),\phantom{\frac{.}{|}}\\
\overline{X}(t)&=\big|\overline{X}(t_{k})+b(\overline{X}(t_{k}))(t-t_k) +\sigma \overline{X}^\alpha(t_{k}) \big(W(t)-W(t_{k})\big)\big|, \quad \text{for}\quad t\in (t_k,t_{k+1}], \quad k\geq 0, \phantom{\frac{|}{.}}
\end{cases}\]
so that $\overline{X}(t)$ is a diffusion process with reflection. Though this scheme  preserves nonnegativity, it is not  positive preserving.\\
Setting $\Delta =t_k-t_{k-1}$, in~\cite{berkaoui2008euler}, the authors prove a strong convergence result, showing that, for all $p\geq 1$, there exists a positive constant $C(p)$ such that
\[
\mathbb{E}\left[\sup_{t_0\leq t\leq T}|X(t)-\overline{X}(t)|^{2p}\right]^{\frac{1}{2p}}\leq C(p)\sqrt{\Delta }.
\]
\cite{bossy2007efficient}  prove that the weak error is of order one in $\Delta$. In the particular case  $\alpha = \frac{1}{2}$, the previous results hold  under some further
conditions on $b(0)$ and $\sigma$.\\

A different method, known as splitting method, is analyzed by~\cite{MoroSchurz}. The authors prove that the method has a good convergence rate when the coefficients are sufficiently regular on the whole Euclidean space, and apply it numerically to various models, including the classical CIR model. \\

\cite{alfonsi2005discretization} has proposed a positive preserving drift implicit Euler scheme $\breve{X}(t)$ for the solution of the following  stochastic differential equation
\begin{equation*}\label{Eq:CIRAlfonsi2005}
dX(t)=(a-\kappa\,X(t))dt+\sigma\sqrt{X(t)}\,dW(t),\\
\end{equation*}
where $W$ denotes a standard Brownian motion, $a \geq 0$, $\kappa \in\mathbb{R}$, $\sigma > 0$, $t_0=0$, and $X(t_0)=x\geq 0$, which includes the classic CIR process $X(t)$.
 In this pioneer paper, the author considers a time horizon $T > 0$ and a regular stepsize $\Delta=t_k-t_{k-1}=\frac{T}{N}$;  under the strong Feller  condition  $2a > \sigma^2$, and when  $1+\kappa\frac{T}{N}>0$ the author proves that the weak convergence  rate of the drift implicit scheme is of order one in $\Delta$, while, for the strong convergence, he proves
\begin{equation*}\label{eq:SCAlfonsi2005}
\mathbb{E}\left[\sup_{t_0\leq t\leq T}|X(t)-\breve{X}(t)|\right]\leq \frac{C}{\sqrt{|\log(\Delta)|}}.
\end{equation*}
In the same paper, Alfonsi proposes also a different scheme, obtained via the implicit Euler scheme for the process~$Y(t)=\sqrt{X(t)}$, and  shows only numerically  that the scheme converges  very well.
 The approximation~$\widetilde{Y}(t)$  is defined implicitly by
\begin{equation}\label{eq:ESMAlfonsi-DNS}
\begin{cases}
\widetilde{Y}(t_0)&=\sqrt{X(t_0)},\\
\widetilde{Y}(t)&=\widetilde{Y}(t_k)+\left(\frac{a-\frac{\sigma^2}{4}}{2\widetilde{Y}(t)}-\frac{\kappa}{2}\,\widetilde{Y}(t)\right)(t-t_k)  +\frac{\sigma}{2}\big(W(t)-W(t_k)\big),\,\text{for $t\in(t_k,t_{k+1}]$},
\end{cases}\end{equation}
and, since $1+\kappa\frac{T}{2N}>0$, Eq.~\eqref{eq:ESMAlfonsi-DNS} has a unique solution $\widetilde{Y}(t)$ for $t\in(t_k,t_{k+1}]$, given by
\begin{equation*}\label{eq:dIES-Alfonsi}
\widetilde{Y}(t)= \frac{\widetilde{Y}(t_k)+ \frac{\sigma}{2}\big(W(t)-W(t_k)\big)+\sqrt{\big[\widetilde{Y}({t_k})+ \frac{\sigma}{2}\big(W(t)-W(t_k)\big)\big]^2+2\left(1+\frac{\kappa}{2}(t-t_k)\right)\left(a-\frac{\sigma^2}{4}\right)(t-t_k)}
}{2\big(1+\frac{\kappa}{2}\,(t-t_k)\big)}.
\end{equation*}
On the time grid $t_k$, the drift-implicit Euler scheme is defined by $y_k:=\widetilde{Y}(t_k)$. Consequently, the transformation~$x_k=y_k^2$ gives a positive approximation for the classical CIR model and the ``diffusive'' approximation is given by $\widetilde{X}(t) = \widetilde{Y}^2(t) $  for $t\in[t_0,T]$.

~\cite{dereich2011euler}, under the further assumption $\kappa>0$, prove a convergence result  for this scheme, using the approximation process $\widehat{X}(t)$ defined as the piecewise linear interpolation of $x_k$:\\
Under the strong Feller condition $\sigma^2<2a$, the authors show that
\begin{equation*}\label{eq:SCDNS}
\forall\; p\in\left[1,\frac{2a}{\sigma^2}\right)\quad \exists\; K_p \quad \text{s.t.}\quad \left(\mathbb{E}\left[\sup_{t_0\leq t\leq T}|X(t)-\widehat{X}(t)|^p\right]\right)^\frac{1}{p}\leq K_p\sqrt{\Delta|\log\left(\Delta\right)|}.
\end{equation*}

As noted in~\cite{alfonsi2013strong}, the result in~\cite{dereich2011euler} implies that, under the same conditions,
\begin{equation}\label{eq:SCAlfonsi2013-bis}
\textstyle{\left(\mathbb{E}\left[\sup_{t_0\leq t\leq T}|X(t)-\widetilde{X}(t)|^p\right]\right)^\frac{1}{p}}\leq K_p\,\sqrt{\Delta}.
\end{equation}
Indeed,  the ``diffusive'' approximation in~\cite{alfonsi2013strong} and the piecewise linear approximation considered in~\cite{dereich2011euler} share the same value $x_k=y_k^2$ on the  time grid $t_k$.

Moreover, ~\cite{alfonsi2013strong} shows that~\eqref{eq:SCAlfonsi2013-bis} still holds  when  $\kappa\leq 0$ and  $1+\kappa\frac{T}{2N}>0$, and proves that, under the more restrictive assumptions on the CIR parameters $\sigma^2<a$,
\begin{equation*}\label{eq:SCAlfonsi2013}
\forall \; p\in\left[1,\frac{4a}{3\sigma^2}\right) \quad \exists\; K_p\quad \text{s.t.} \quad \left(\mathbb{E}\left[\sup_{t_0\leq t\leq T}|X(t)-\widetilde{X}(t)|^p\right]\right)^\frac{1}{p}\leq K_p\,\Delta.
\end{equation*}
Furthermore the method used in~\cite{alfonsi2013strong} to get the convergence result may be applied to a larger  class of stochastic differential equations,  and in this sense it is more general than the strong convergence result in~\cite{dereich2011euler}.\\

Since we are  interested to positive preserving Euler-type methods,  we have generalized the  positive preserving scheme in~\cite{dereich2011euler} and~\cite{alfonsi2013strong} for the fixed delay CIR model considered in this paper. Hereunder we describe our generalization, the original drift implicit Euler scheme for the classical CIR model can be recovered by taking~$b=0$.

Consider the process $Y^{(b)}(t)=\sqrt{X^{(b)}(t)}$, which, by It\^o's formula, satisfies
\begin{align}
Y^{(b)}(t)&=\sqrt{X_0(t)}\qquad t_0-\tau\leq t\leq t_0,\label{eq:diffProcessY-InitialSegment}\\
dY^{(b)}(t)&= \left(\underline{a}(t)\frac{1}{Y^{(b)}(t)} - \overline{a} Y^{(b)}(t)\right) dt + \overline{b} \,\frac{\left(Y^{(b)}(t-\tau)\right)^2}{Y^{(b)}(t)}\, dt +   \overline{\sigma} \,dW(t),\quad t>t_0\label{eq:diffProcessY}
\end{align}
where
\begin{equation}\label{eq:notations}
\underline{a}(t)=\frac{4a\gamma(t)-\sigma^2}{8},\quad\overline{a}=\frac{a}{2},\quad\overline{b}=\frac{b}{2}\,\text{ and }\;
\overline{\sigma}=\frac{\sigma}{2}.\end{equation}
We consider a constant discretization step $\Delta$, and assume that  $\Delta=\frac{\tau}{N}$, for a fixed $N\in \mathbb{N}$,
 consequently,  for the time grid $t_{k}=t_0+k\Delta$ it holds
\begin{equation}\label{t-k-tau}
 t_{k+1}-\tau
=t_{k+1-N}.
\end{equation}
For notational convenience, in the sequel we will use also the symbol $\Delta t_k$ instead of $\Delta$.
\\
 The  ``diffusive'' paths approximation $\widetilde{Y}(t)$ of $Y^{(b)}(t)$ is implicitly defined by
\begin{equation}\label{eq:ESM}\begin{cases}
\widetilde{Y}(t)&=\widetilde{Y}_0(t),\,\text{for $t\in[t_0-\tau, t_0]$}\\
\widetilde{Y}(t)&=\widetilde{Y}({t_k})+\left(\underline{a}(t)\frac{1}{\widetilde{Y}(t)} - \overline{a} \widetilde{Y}(t)\right)(t-t_k)+\overline{b} \,\frac{\widetilde{Y}^2(t-\tau)}{\widetilde{Y}(t)}\, (t-t_k) +   \overline{\sigma} \,\Delta_k W(t),\,\text{for $t\in(t_k,t_{k+1}]$}
\end{cases}\end{equation}
where
\[\Delta_k W(t)=W(t)-W(t_k),\quad k\geq 0,\]
and  $\widetilde{Y}_0(t)$ is an approximation of~$\sqrt{X_0(t)}$ in~$[t_0-\tau,t_0]$ in a suitable sense.
\\

Since the parameters $\overline{a}$, $\overline{b}$ and $\overline{\sigma}$ are nonnegative,  and we assume that the function $\underline{a}(t)$ is positive, Eq.~\eqref{eq:ESM} has the unique positive solution given by, for $t\in(t_k,t_{k+1}]$,
\begin{equation*}\label{eq:dIES}
\widetilde{Y}(t)=\textstyle{ \tfrac{\widetilde{Y}(t_k)+ \overline{\sigma}\Delta_k W(t)+\sqrt{\big(\widetilde{Y}({t_k})+ \overline{\sigma}\Delta_k W(t)\big)^2+4\big(1+\overline{a}\,(t-t_k)\big)
\big(\underline{a}(t)+\overline{b}\,\widetilde{Y}^2(t-\tau)\big)(t-t_k)}}{2\big(1+\overline{a}\,(t-t_k)\big)}.
}
\end{equation*}

From now on, setting
\begin{equation}\label{eq:notations_yk}
y_k := \widetilde{Y} (t_k)\quad\text{for all $k\geq -N$},\end{equation}
 using \eqref{eq:ESM} and taking into account \eqref{t-k-tau}
 we get the following discrete time Euler implicit approximation scheme for the process $Y^{(b)}(t)$,
\begin{equation}\label{eq:ApproximationScheme}
y_{k+1}=y_k+\left(\underline{a}(t_{k+1})\frac{1}{y_{k+1}} - \overline{a} y_{k+1}\right)\Delta t_k+\overline{b} \,\frac{y^2_{k+1-N}}{y_{k+1}}\, \Delta t_k+   \overline{\sigma} \,\Delta W_k,\quad\text{with } \Delta W_k =W(t_{k+1})-W(t_k).
\end{equation}
By  the following position, we get the discrete time approximation scheme for the  process $X^{(b)}(t)$
\begin{align*}
&x_k:=y_k^2,\quad k\geq -N.
\label{eq:StartPosition2}
\end{align*}
Note that we do not necessarily assume $x_k =  X_0 (t_k) $ (or equivalently $y_k = \sqrt{X_0 (t_k)}$), for all $k$ such that $t_k \in [t_0- \tau, t_0]$.

In this paper we consider the piecewise linear approximation
\begin{equation}\label{eq:LinearInterpolationDNSScheme}\widehat{X}(t):=x_k+(t-t_k)\,\frac{x_{k+1}-x_k}{t_{k+1}-t_k}, \quad
t\in [t_k, t_{k+1}],
\end{equation}
and extend Proposition~$3.3$ and  The\-o\-rem~$1.1$ in~\cite{dereich2011euler} (see Remark~\ref{oss:prop:EB_IS_Y}), by proving that  the strong approximation error for $\hat{X}(t)$ in the time grid in $O(\sqrt{\Delta})$ (see Proposition~\ref{prop:EB_IS_Y}), and  in the whole interval~$[t_0,T]$ is $O(\sqrt{\Delta  |\log(\Delta  )|})$ (see  Theorem~\ref{thm:Teorema1}), under suitable conditions on $\sup_{t_k \in[t_0-\tau,t_0]}|X_0(t_k)-x_k|$ (see~\eqref{eq:HPappErr-X-0}),
and under the condition
\[
 \sigma^2< \frac{2}{1+ \left\lceil \frac{T-t_0}{\tau}\right\rceil}\,2 a \underline{\gamma},\quad \text{where } \underline{\gamma}=\inf_{t\in [t_0,T]} \gamma(t)>0,\text{ and   $\left\lceil x\right\rceil$ denotes the smallest integer $\geq x$.}
\]
Note that the above condition is stronger than the natural condition which guarantees that  the process $X^{(b)}(t)$ and the approximation scheme $y_k$   are both positive in the interval~$[t_0,T]$.\\

When $\tau$ is small, and smaller than the discretization step $\Delta$, one could use a different approach: First of all,
we observe that the fixed delay CIR model $X^{(b)}(t)$ is near the solution $V^{(b)}(t)$ of the equation
\begin{align}
dV^{(b)}(t)
&= (a-b) \left[\frac{a}{a-b}\,\gamma(t) -   V^{(b)}(t) \right]dt + \sigma \sqrt{V^{(b)}(t)} dW(t), \quad V^{(b)}(t_0)=X_0(t_0),\label{eq:CIR_a-b}
\end{align}
obtained by setting $\tau=0$  in~\eqref{eq:CIRdelayX}.
Indeed, by adding and subtracting the term  $bX^{(b)}(t)$ in~\eqref{eq:CIRdelayX}, we get
\begin{align*}
X^{(b)}(t)&=X_0(t) \qquad t_0-\tau\leq t \leq t_0,\\
dX^{(b)}(t)&=[a\gamma(t)-(a-b)X^{(b)}(t))+bX^{(b)}(t-\tau)-bX^{(b)}(t)]dt+\sigma\sqrt{X^{(b)}(t)}dW(t),\quad t>t_0.
\end{align*}
Hence, when $\tau$ is small, the difference $X^{(b)}(t-\tau)- X^{(b)}(t)$ is small (see Proposition~\ref{prop:p-thmoment}), and consequently, one could   approximate~$X^{(b)}(t)$ by an approximation of the solution of Eq.~\eqref{eq:CIR_a-b}. When $b<a$, Eq.~\eqref{eq:CIR_a-b} is a CIR model with deterministic long term depending on time, and then one can use the (suitably modified) approximation result by~\cite{dereich2011euler}, under the strong Feller condition  $\sigma^2<2a\underline{\gamma}$.
\\

The paper is organized as follows: Section~\ref{sec:SPR} has the aim to give some preliminary results on the moments of the processes $X(t)=X^{(b)}(t)$ and $Y(t)=Y^{(b)}(t)$ (for the sake of simplicity, we will write $X(t)$ and $Y(t)$ instead of $X^{(b)}(t)$ and $Y^{(b)}(t)$, unless necessary).   Section~\ref{sec:MB_IS_Y} and  Section~\ref{sec:EB_IS_Y} are devoted to  moment bounds and preliminary error bounds for the implicit Euler scheme,  extending  the corresponding results in~\cite{dereich2011euler}  to the fixed delay CIR model.   Section~\ref{sec:EBDISRES} is devoted to our main convergence result (Theorem~\ref{thm:Teorema1}).
 The paper ends with an appendix
   containing some nontrivial results on the $p$-moments of the classical CIR model: in particular we prove Lemma~\ref{lem:exBD},  a generalization of Lemma~$A.1$  in~\cite{bossy2007efficient}.

\section{Some Preliminary Results}\label{sec:SPR}
Let $(\Omega,\mathcal{F}, \mathbb{P})$ be a complete probability space with a right continuous filtration $\{\mathcal{F}_t\}_{t\geq t_0}$ and~$\mathcal{F}_{t_0}$ contains all $\mathbb{P}$-null sets. \\
The following standing assumptions hold:
\begin{assumptions}\label{ass:hp_EA}${}$
\begin{description}
\item[(i)] The process $W(t)$, $t\geq t_0$, is a Brownian motion with respect to the filtration $\mathcal{F}_t$, with $W(t_0)=0$, so that
    $\mathcal{F}_{t_0}$ is independent of  natural filtration $\mathcal{F}^{W}_t$.
\item[(ii)] The parameters $a$ and $\sigma$  are positive constants, and the parameter $b$ is a nonnegative constant.
\item[(iii)] The segment process $X_0(\cdot)$ is a positive continuous random function on $[t_0-\tau, t_0]$
    such that
    \begin{equation*}
    \label{eq:MeasurableCond-CIR}
    \int_{t_0-\tau}^{t_0}X_0(u) du < +\infty,\;\;\mathbb{P}\text{-a.s.};\end{equation*}
    moreover, we require that $X_0(t)$ is  measurable with respect to $\mathcal{F}_{t_0}$, for $t_0-\tau\leq t\leq t_0$, and therefore~$\sigma\{X_0(u)\,; u\in[t_0-\tau,t_0]\}$ is independent of $\mathcal{F}^{W}_t$, $t\geq t_0$.
\item[(iv)] The deterministic function $\gamma(t)$ is measurable, positive, and bounded on every bounded interval; in particular, in the time interval $[t_0,T]$,
    \begin{equation}\label{eq:BoundsForGamma}
    0<\underline{\gamma}:=\inf_{t\in[t_0,T]}\gamma(t)\leq \gamma(t)\leq\sup_{t\in[t_0,T]}\gamma(t):=\overline{\gamma}.\end{equation}
\end{description}
\end{assumptions}

We recall the following results without proofs. The interested reader is referred to~\cite{FloreNappo_Delay} for the proofs, which are based on the general results
of~\cite{DDLongTerm,deelstra_delbaen}.

\begin{proposition}\label{prop:FC_DELAY_EA}${}$\\
Under the Assumptions \ref{ass:hp_EA}, Eq.~\eqref{eq:CIRdelayX} with initial segment process~\eqref{eq:CIRdelayX-InitialSegment}  admits a unique solution $X(t)$.  Moreover, if the following inequality holds
\begin{equation}\label{eq:feller's_condition_delay_X}
\sigma^2\leq2a\gamma(t)\quad\text{for all $t\geq t_0$},
\end{equation}
then the process $X(t)$ is positive.
\end{proposition}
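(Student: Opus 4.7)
I would proceed by the \emph{method of steps}. On the first interval $I_1=[t_0,t_0+\tau]$, the delayed term $X(t-\tau)=X_0(t-\tau)$ is entirely determined by the initial segment, which by Assumption~\ref{ass:hp_EA}(iii) is an $\mathcal{F}_{t_0}$-measurable continuous positive process, independent of the Brownian increments after $t_0$. Hence~\eqref{eq:CIRdelayX} reduces on $I_1$ to the (non-delayed) SDE
\[
dX(t)=\bigl[-a\,X(t)+\delta_1(t)\bigr]\,dt+\sigma\sqrt{X(t)}\,dW(t),\qquad \delta_1(t):=a\gamma(t)+bX_0(t-\tau),
\]
with $X(t_0)=X_0(t_0)$. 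The process $\delta_1$ is $(\mathcal{F}_t)$-adapted, nonnegative, and (a.s.) bounded on $I_1$ by Assumption~\ref{ass:hp_EA}(iii)--(iv). Setting $2\beta=-a$ and $g(x)=\sigma\sqrt{x}$ (with the convention $g(x)=\sigma\sqrt{|x|}$ to extend it to $\mathbb{R}$), this fits the framework of~\cite{deelstra_delbaen}: $\beta<0$, $\delta_1\ge 0$ is a.s.\ integrable, and $g$ is $1/2$-Hölder and vanishes at zero. A unique nonnegative strong solution on $I_1$ follows by combining Yamada--Watanabe pathwise uniqueness with the existence result of~\cite{deelstra_delbaen}.

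Next, I would iterate. Inductively, assume $X$ has been constructed continuous and nonnegative on $[t_0-\tau,t_0+(k-1)\tau]$. Then on $I_k=[t_0+(k-1)\tau,t_0+k\tau]$, the process $\delta_k(t):=a\gamma(t)+bX(t-\tau)$ is a known continuous nonnegative adapted process, and the same argument yields a unique nonnegative solution on $I_k$, which agrees with the previous piece at $t_0+(k-1)\tau$ by construction. Gluing these solutions together produces the unique nonnegative process $X$ satisfying~\eqref{eq:CIRdelayX-InitialSegment}--\eqref{eq:CIRdelayX} on $[t_0-\tau,+\infty)$.

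For the positivity claim under~\eqref{eq:feller's_condition_delay_X}, the plan is to use a pathwise comparison argument. Introduce the auxiliary time-inhomogeneous CIR process $Z(t)$ solving
\[
dZ(t)=\bigl[a\gamma(t)-a\,Z(t)\bigr]\,dt+\sigma\sqrt{Z(t)}\,dW(t),\qquad Z(t_0)=X_0(t_0),
\]
which corresponds to dropping the delay term, i.e., $b=0$. Since $bX(t-\tau)\ge 0$ for $t\ge t_0$ by the nonnegativity already established, the drift of $X$ dominates that of $Z$ pathwise, while they share the same diffusion coefficient. The Yamada--Watanabe comparison theorem for one-dimensional SDEs with $1/2$-Hölder diffusion then gives $X(t)\ge Z(t)$ a.s.\ for all $t\ge t_0$. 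Under the pointwise condition $\sigma^2\le 2a\gamma(t)$, the process $Z$ fulfils the generalized Feller condition of~\cite{DDLongTerm,deelstra_delbaen} and is therefore a.s.\ strictly positive, whence $X(t)>0$ a.s.\ for all $t\ge t_0$.

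The main obstacle I anticipate is making the comparison step rigorous in the presence of a delayed, stochastic drift: the inequality $bX(t-\tau)\ge 0$ needed to dominate $Z$'s drift is itself a conclusion of the method-of-steps construction, so existence/nonnegativity must be obtained first and positivity only in a second pass. A secondary subtlety is reverifying pathwise uniqueness and the comparison estimate on each subinterval for a generic adapted drift $\delta_k$; this is standard via the Yamada--Watanabe smooth-approximant trick applied conditionally on $\mathcal{F}_{t_0+(k-1)\tau}$, but it must be spelled out because the classical statements are usually formulated for deterministic coefficients.
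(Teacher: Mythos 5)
Your proposal is correct and follows essentially the route the paper intends: the paper itself states this proposition without proof, deferring to \cite{FloreNappo_Delay} and noting that the argument rests on the general results of \cite{DDLongTerm,deelstra_delbaen}, with exactly the reduction you use ($\beta=-\tfrac{a}{2}$, $\delta(t)=a\gamma(t)+bX(t-\tau)$, $g(x)=\sigma\sqrt{x}$) spelled out in the introduction, and with the interval-by-interval Ikeda--Watanabe comparison you invoke for positivity being the same device the authors use in the proof of Proposition~\ref{prop:confronto}. Your explicit ordering of the argument (existence and nonnegativity by the method of steps first, positivity by comparison second) is the right way to close the circularity you flag, so no genuine gap remains.
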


In other words condition~\eqref{eq:feller's_condition_delay_X} implies that the origin is unattainable.
\begin{proposition}\label{prop:integrabilityEA}${}$\\
 Under Assumptions~\ref{ass:hp_EA}, let the process $X(t)$ be the solution of Eq.~\eqref{eq:CIRdelayX} with initial segment process~\eqref{eq:CIRdelayX-InitialSegment}.\\
If furthermore
\begin{equation*}\label{eq:IntCondEA_HP}
\int_{t_0-\tau}^{t_0}\mathbb{E}\left[X_0(u)\right]du<+\infty, \quad \text{ and } \quad
\mathbb{E}\left[X_0(t_0)\right]<+\infty,\end{equation*}
then
\begin{enumerate}
\item for all $t\geq  t_0$, \quad $\mathbb{E}\left[\sup_{t_0\leq u \leq t}X(u)\right]<\infty$,
\item  for all $t\geq t^\prime\geq t_0$
       \begin{equation*}\label{eq:ExpValDelay_EA}
       \mathbb{E}\left[X(t)\right]=\mathit{e}^{-a(t-t^\prime)}\mathbb{E}\left[X(t^\prime)\right]
       +\int_{t^\prime}^t\mathit{e}^{-a(t-u)}\left(a\gamma(u)+b\mathbb{E}\left[X(u-\tau)
       \right]\right)du.\end{equation*}
\end{enumerate}
\end{proposition}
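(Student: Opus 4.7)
The plan is to multiply Eq.~\eqref{eq:CIRdelayX} by the integrating factor $e^{at}$ in order to isolate the delay term, and then to iterate the analysis over consecutive intervals of length $\tau$, since on each such interval the delayed term $bX(u-\tau)$ acts as a random forcing already controlled at the previous step. The non-Lipschitz diffusion coefficient $\sigma\sqrt{X(u)}$ will be handled via localization with stopping times $\tau_n=\inf\{s\geq t_0:X(s)\geq n\}$, which are well defined and satisfy $\tau_n\to\infty$ almost surely, since $X$ is nonnegative and nonexploding by Proposition~\ref{prop:FC_DELAY_EA}.

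To establish formula (2), I would first apply It\^{o}'s formula to $e^{au}X(u)$ on $[t',t]$, obtaining, after cancellation of the $\pm a\,e^{au}X(u)\,du$ terms,
\[
e^{at}X(t)=e^{at'}X(t')+\int_{t'}^t e^{au}\bigl[a\gamma(u)+bX(u-\tau)\bigr]du+\sigma\int_{t'}^t e^{au}\sqrt{X(u)}\,dW(u).
\]
Replacing $t$ by $t\wedge\tau_n$ turns the stopped stochastic integral into a bounded zero-mean martingale, so taking expectation produces a localized version of (2). On the first interval $I_1=[t_0,t_0+\tau]$ the delayed term equals $X_0(u-\tau)$, whose expectation is integrable by hypothesis; hence $\sup_n\mathbb{E}[X(t\wedge\tau_n)]$ is bounded uniformly in $n$, Fatou yields $\mathbb{E}[X(t)]<\infty$ on $I_1$, and monotone convergence removes the stopping. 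An induction on $k$ over $I_k=[t_0+(k-1)\tau,t_0+k\tau]$ then extends (2) to every $t\geq t_0$, since the function $u\mapsto\mathbb{E}[X(u-\tau)]$ on $I_{k+1}$ is integrable by the previous step.

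For (1), I would write
\[
\sup_{t_0\leq s\leq t}X(s)\leq X(t_0)+\int_{t_0}^t\bigl[a\gamma(u)+bX(u-\tau)\bigr]du+\sigma\sup_{t_0\leq s\leq t}\Bigl|\int_{t_0}^s\sqrt{X(u)}\,dW(u)\Bigr|,
\]
dropping the non-positive drift contribution $-\int_{t_0}^s aX(u)\,du$. The first two summands have finite expectation by (2); for the martingale term, the Burkholder--Davis--Gundy inequality combined with Jensen's inequality gives
\[
\mathbb{E}\Bigl[\sup_{t_0\leq s\leq t}\Bigl|\int_{t_0}^s\sqrt{X(u)}\,dW(u)\Bigr|\Bigr]\leq C\sqrt{\int_{t_0}^t\mathbb{E}[X(u)]\,du}<\infty,
\]
after localizing once more with $\tau_n$ and passing to the limit via Fatou. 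The main obstacle is the one typical of square-root diffusions: $\sqrt{X(u)}$ is not Lipschitz, so Gronwall cannot be applied directly to $\mathbb{E}[\sup X]$; the two-step strategy above, controlling $\mathbb{E}[X(t)]$ first from the drift structure alone and only then bounding the pathwise supremum via BDG on top of that, bypasses this difficulty, while the delay adds only the extra inductive layer on intervals of length $\tau$, whose base case is provided by the integrability hypothesis on $X_0$.
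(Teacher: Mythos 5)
The paper states this proposition without proof, deferring to \cite{FloreNappo_Delay} (whose arguments rest on \cite{DDLongTerm,deelstra_delbaen}), so there is no in-paper proof to compare against; your self-contained argument --- integrating factor $e^{au}$, localization by $\tau_n=\inf\{s\geq t_0:\,X(s)\geq n\}$, induction over intervals of length $\tau$ so that the delayed term is always controlled by the previous step, and BDG plus Jensen for the running supremum --- is the standard route and is sound in all its essentials. The one step you should tighten is the passage from the localized identity to the exact equality in part (2): ``monotone convergence removes the stopping'' works for the Lebesgue term $\int_{t'}^{t\wedge\tau_n}e^{au}[a\gamma(u)+bX(u-\tau)]\,du$ (nonnegative integrand), but not for the left-hand side, since $X(t\wedge\tau_n)$ is not monotone in $n$; decomposing $\mathbb{E}[e^{a(t\wedge\tau_n)}X(t\wedge\tau_n)]=\mathbb{E}[e^{at}X(t)\mathbf{1}_{\{\tau_n>t\}}]+n\,\mathbb{E}[e^{a\tau_n}\mathbf{1}_{\{\tau_n\leq t\}}]$ shows that you would need $n\,\mathbb{P}(\sup_{s\leq t}X(s)\geq n)\to 0$, i.e.\ essentially part (1), which you establish only afterwards. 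The clean repair stays entirely within your scheme: once Fatou gives the local bound $\sup_{u\in[t_0,t]}\mathbb{E}[X(u)]<\infty$, you have $\mathbb{E}\big[\int_{t'}^{t}e^{2au}X(u)\,du\big]<\infty$, so the stochastic integral $\int_{t'}^{\cdot}e^{au}\sqrt{X(u)}\,dW(u)$ is a genuine square-integrable martingale and you may take expectations in the \emph{unstopped} identity, obtaining (2) directly. Everything else --- the Fatou argument for integrability on the first interval, the induction over successive intervals, and the BDG/Jensen bound for the supremum in part (1) --- is correct as written.
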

In the next proposition, we   prove that the fixed delay CIR process is larger than a classical CIR process; as a consequence, when the strong Feller condition holds, the negative moments are finite for all~$q>0$.
\begin{proposition}\label{prop:confronto}${}$\\
Under Assumptions~\ref{ass:hp_EA}, let $X(t)$ and $Y(t)$ be the solutions of Eq.~\eqref{eq:CIRdelayX} with initial segment process~\eqref{eq:CIRdelayX-InitialSegment}  and Eq.~\eqref{eq:diffProcessY} with initial segment process~\eqref{eq:diffProcessY-InitialSegment}, respectively, and let $\underline{X}(t)$ be the solution of the following classical CIR model
\begin{equation*}\label{eq:CIRX}
\begin{cases}
d\underline{X}(t)=&a\left(\underline{\gamma}-\underline{X}(t)\right)dt+\sigma\sqrt{\underline{X}(t)}dW(t),\\
\underline{X}(t_0)=&
\underline{X}_0,\end{cases}
\end{equation*}
where $\underline{\gamma}=\inf_{t\in[t_0,T]}\gamma(t)$.
Setting $\underline{Y}(t)=\sqrt{\underline{X}(t)}$, if $\underline{X}_0=X_0(t_0)$,  then
\begin{equation}\label{eq:comparison}
\mathbb{P}\left( \frac{1}{X(t)}\leq \frac{1}{\underline{X}(t)},\; t\in [t_0,T]\right)=\mathbb{P}\left( \frac{1}{Y(t)}\leq \frac{1}{\underline{Y}(t)},\; t\in [t_0,T]\right)=1.
\end{equation}
Moreover, assume that the strong Feller condition $\sigma^2 < 2 a \underline{\gamma}$ is satisfied,
and that
\begin{equation*}\label{eq:CONDuX0}
\mathbb{E}\left[\frac{1}{X^{\frac{1}{2}\nu}_0(t_0)}\right]=\mathbb{E}\left[\frac{1}{\underline{X}^{\frac{1}{2}\nu}_0}\right]<+\infty, \qquad \text{where }\nu=\frac{2a\underline{\gamma}}{\sigma^2}-1>0,\\
\end{equation*}
then, for all $q>0$ there exists a constant $c_q$ such that
\begin{equation}\label{eq:media-int-1suXalla-p}
        \mathbb{E}\left[ \left( \int_{t_0}^T \frac{1}{X(u)} du\right)^q\right]\leq c_{q} \left( 1+ \mathbb{E}\left[ \frac{1}{X^{\frac{1}{2}\nu}_0(t_0)}\right] \right).
        \end{equation}
\end{proposition}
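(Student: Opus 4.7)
The plan is to split the proof into two parts matching the two conclusions. For the pathwise inequality~\eqref{eq:comparison}, I would argue that, given the common Brownian motion and common initial value $X_0(t_0)=\underline{X}_0$, the drift of $X$ dominates that of $\underline{X}$. Concretely, using~\eqref{eq:BoundsForGamma} and $X(t-\tau)\geq 0$ (Proposition~\ref{prop:FC_DELAY_EA}),
\[
a\gamma(t)-ax+bX(t-\tau)\;\geq\;a\underline{\gamma}-ax \qquad\text{for every } x,
\]
so the Yamada--Watanabe / Ikeda--Watanabe pathwise comparison theorem for one-dimensional SDEs with $\tfrac12$-H\"older diffusion coefficient applies. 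Since the delay feedback obstructs a one-shot application, I would iterate over the successive intervals $[t_0+k\tau, t_0+(k+1)\tau]$: on each such interval the delayed value $X(\cdot-\tau)$ is already determined by the preceding step and acts as a known $\mathcal{F}_{t_0+k\tau}$-measurable time-dependent perturbation of the drift, so the comparison theorem applies to a classical (undelayed) SDE with the same $\sigma\sqrt{\cdot}$ diffusion, and the induction step carries the inequality from $t_0+k\tau$ to $t_0+(k+1)\tau$. This yields $X(t)\geq \underline{X}(t)$ a.s.\ on $[t_0,T]$; by positivity of both processes under~\eqref{eq:feller's_condition_delay_X}, $1/X(t)\leq 1/\underline{X}(t)$ on $[t_0,T]$. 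Taking square roots gives the analogue for $Y$ and $\underline{Y}$.

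For the moment bound~\eqref{eq:media-int-1suXalla-p}, the comparison just proved combined with monotonicity of $t\mapsto 1/t$ yields
\[
\int_{t_0}^T \frac{1}{X(u)}\,du \;\leq\; \int_{t_0}^T \frac{1}{\underline{X}(u)}\,du \qquad\mathbb{P}\text{-a.s.},
\]
and raising to the $q$-th power and taking expectation reduces the claim to the bound
\[
\mathbb{E}\left[\left(\int_{t_0}^T \frac{1}{\underline{X}(u)}\,du\right)^{\!q}\right] \;\leq\; c_q\left(1+\mathbb{E}\left[\frac{1}{\underline{X}_0^{\,\nu/2}}\right]\right),
\]
which is precisely the content of Lemma~\ref{lem:exBD} in the appendix (the announced generalization of Lemma~A.1 in~\cite{bossy2007efficient}). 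The strong Feller condition $\sigma^2<2a\underline{\gamma}$ is exactly what guarantees $\nu>0$ and enters the proof of Lemma~\ref{lem:exBD} through the standard estimates on negative moments of the classical CIR process.

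The main obstacle is the pathwise comparison in the presence of the delay term, since the classical comparison theorems are stated for SDEs whose drift depends only on $(t,x)$; the inductive interval-by-interval argument sketched above is the natural fix, and it is clean because the diffusion coefficient is identical in the two equations and pathwise uniqueness holds on each subinterval. The remaining step, namely the integrated negative moment estimate for the classical CIR model, is technical but is isolated in the appendix; once Lemma~\ref{lem:exBD} is granted, the present proposition reduces, via the comparison, to a direct substitution of $\underline{X}_0=X_0(t_0)$.
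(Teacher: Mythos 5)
Your argument follows the paper's proof essentially verbatim: the paper likewise invokes the comparison Theorem~1.1 of \cite{ikeda_watanabe1976} iteratively on the intervals $[t_0+k\tau,\,t_0+(k+1)\tau]$, using the nonnegativity of the delayed drift contribution, to get $X(t)\geq\underline{X}(t)$ and hence \eqref{eq:comparison}, and then reduces \eqref{eq:media-int-1suXalla-p} to the integrated negative-moment bound for the classical CIR process $\underline{X}$. One correction, though: that final bound is the content of Lemma~\ref{prop:Lemma3.1DNS} (which rests on the exponential-moment estimate of Lemma~A.2 in \cite{bossy2007efficient}), not of Lemma~\ref{lem:exBD}. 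The distinction is not cosmetic. Lemma~\ref{lem:exBD} controls the pointwise negative moments $\mathbb{E}\left[1/\underline{X}^p(t)\right]$, which by \eqref{eq:HKprop2} are \emph{infinite} for $p\geq 2a\underline{\gamma}/\sigma^2$, so no Minkowski-type argument based on that lemma can deliver \eqref{eq:media-int-1suXalla-p} for \emph{all} $q>0$; the all-$q$ statement genuinely needs the exponential integrability of $\int_{t_0}^T \underline{X}(u)^{-1}\,du$ supplied by Lemma~\ref{prop:Lemma3.1DNS}. With that citation repaired, your proof coincides with the paper's.
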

\begin{proof}${}$\\
First of all, observe that, since $b \, X(t)$ is nonnegative, the comparison Theorem $1.1$ in~\cite{ikeda_watanabe1976} in each time interval~$[t_0+k\tau, t_0+(k+1)\tau]$, implies
        \[X(t)\geq \underline{X}(t)\;\text{for all $t\geq t_0$, $\mathbb{P}$-a.s.},\Longleftrightarrow
         \frac{1}{X(t)}\leq\frac{1}{\underline{X}(t)}\;\text{for all $t\geq t_0$, $\mathbb{P}$-a.s.},\]
and \eqref{eq:comparison} follows, together with inequality~\eqref{eq:media-int-1suXalla-p}, the latter being an immediate consequence of  Lemma~\ref{prop:Lemma3.1DNS} in the appendix.\\
\end{proof}

In the next proposition, we show that if the segment process $X_0(t)$, $t\in [t_0-\tau, t_0]$, has finite $p^{th}$-moments, then the same holds for the process $X(t)$, $t\in [t_0,T]$.
\begin{proposition}\label{prop:p-thmoment}${}$\\
Under Assumptions~\ref{ass:hp_EA}, let the process $X(t)$ be the solution of Eq.~\eqref{eq:CIRdelayX} with initial segment process~\eqref{eq:CIRdelayX-InitialSegment}.\\
If furthermore, for some  $p\geq 1$,
\begin{equation}\label{eq:p-thmoment}
\sup_{u\in[t_0-\tau,t_0]}\mathbb{E}\left[X^p_0(u)\right]du\leq K_p,\quad\text{where $K_p$ is a positive constant,}
\end{equation}
 then
\begin{enumerate}
\item the process  $X(t)$ has $p^{th}$-moments finite and uniformly bounded on bounded intervals,\\
\item for any $T\geq t_0$, there exists a constant $c_{1,p}$ such that
\begin{equation*}\label{eq:mediadifferenzaXhp}
\left(\mathbb{E}\left[|X(t)-X(s)|^p\right]\right)^\frac{1}{p}\leq c_{1,p}|t-s|^{\frac{1}{2}},\quad\text{for $t,s\in[ t_0, T]$}.
\end{equation*}
\end{enumerate}
Assume moreover that
\begin{equation}\label{eq:IntCondEA_HP2-unif}
\textstyle{\mathbb{E}\left[\sup_{t\in[t_0-\tau,t_0]}X_0^p(t) \right]\leq \mathcal{K}_p},
\end{equation}
then,
\begin{enumerate} \setcounter{enumi}{2}
\item for any $T\geq t_0$, there exists a constant $c_{2,p}$ such that
 \begin{equation}\label{eq:IntCondEA_HP2-unif-t_0T}
\textstyle{\mathbb{E}\left[\sup_{t\in[t_0,T]}X^p(t)\right]\leq c_{2,p}},
\end{equation}
\item for any $T\geq t_0$, there exists a constant $c_{3,p}$  such that
\begin{equation}\label{eq:MODULUS-IntCondEA_HP2-unif}
\big(\mathbb{E}\left[w_X^p(\delta;[t_0,T])\right]\big)^{\frac{1}{p}}\leq c_{3,p} \,(\delta |\log (\delta)|)^\frac{1}{2},
\end{equation}
where
$$
w_X(\delta;[t_0,T])=\sup_{s,t\in [t_0,T], |t-s|\leq\delta}|X(t)-X(s)|, \qquad \delta >0,
$$
is the modulus of continuity of the process $X(t)$.
\end{enumerate}
\end{proposition}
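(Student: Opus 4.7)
I would prove the four assertions in order, each relying on its predecessors, and expect part~4 to be the only real difficulty.

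\emph{Parts~1 and~2.} Apply It\^o's formula to $X^p$ after localization by $\theta_n=\inf\{t\geq t_0:X(t)\geq n\}$. The resulting drift $pX^{p-1}[a\gamma-aX+bX(\cdot-\tau)]+\tfrac{1}{2}p(p-1)\sigma^2 X^{p-1}$, after dropping the negative $-paX^p$ term and invoking Young's inequalities $X^{p-1}\leq 1+X^p$ and $X^{p-1}X(u-\tau)\leq\tfrac{p-1}{p}X^p(u)+\tfrac{1}{p}X^p(u-\tau)$, yields under expectation
\[
\mathbb{E}[X^p(t\wedge\theta_n)]\leq\mathbb{E}[X^p(t_0)]+C\!\int_{t_0}^{t}\!\bigl(1+\mathbb{E}[X^p(u\wedge\theta_n)]+\mathbb{E}[X^p((u-\tau)\wedge\theta_n)]\bigr)du.
\]
On the first delay block $[t_0,t_0+\tau]$ the delayed moment is bounded by the hypothesis $K_p$, so Gronwall gives a uniform bound there. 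Iterating over the $\lceil(T-t_0)/\tau\rceil$ subsequent blocks and sending $n\to\infty$ by Fatou completes part~1. Part~2 then follows by writing $X(t)-X(s)$ in integral form: Minkowski plus part~1 bound the $L^p$-norm of the drift by $C|t-s|$, while BDG bounds $\|\sigma\int_s^t\sqrt{X}\,dW\|_p$ by $C|t-s|^{1/2}$.

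\emph{Part~3.} Apply It\^o to $X^p$ again and take $\sup_{s\in[t_0,T]}$ before expectation; after a localization as above, the nonnegative drift pieces are controlled exactly as in part~1. For the martingale $M_t=p\sigma\int_{t_0}^{t}X^{p-1/2}dW$, BDG gives $\mathbb{E}[\sup|M|]\leq C\,\mathbb{E}[(\int_{t_0}^{T}X^{2p-1}du)^{1/2}]$; bounding $X^{2p-1}\leq(\sup X)^p\cdot X^{p-1}$, extracting $(\sup X)^{p/2}$ from the square root, and invoking Young's inequality $2ab\leq\epsilon a^2+\epsilon^{-1}b^2$ gives $\tfrac{\epsilon}{2}\mathbb{E}[\sup X^p]+\tfrac{C^2}{2\epsilon}\mathbb{E}[\int X^{p-1}du]$. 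The second term is finite by part~1 (since $X^{p-1}\leq 1+X^p$), and the first is absorbed into the left-hand side for $\epsilon$ small, yielding~\eqref{eq:IntCondEA_HP2-unif-t_0T}.

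\emph{Part~4 (the main obstacle).} Decompose $X(t)-X(s)=D(s,t)+N(s,t)$ with $N(s,t)=\sigma\int_s^{t}\sqrt{X}\,dW$. The drift satisfies the pointwise estimate $|D(s,t)|\leq|t-s|\bigl(a\overline{\gamma}+a\sup_{[t_0,T]}X+b\sup_{[t_0-\tau,T]}X\bigr)$, so part~3 and~\eqref{eq:IntCondEA_HP2-unif} yield $\mathbb{E}[w_D^p(\delta)]^{1/p}\leq C\delta$. For $N$, I would invoke the Dambis--Dubins--Schwarz time change $N(t)=B(\langle N\rangle_t)$, where $B$ is a Brownian motion (on a possibly enlarged space) and $\langle N\rangle_t=\sigma^2\int_{t_0}^{t}X\,du\leq\sigma^2(T-t_0)\sup X$; since $|\langle N\rangle_t-\langle N\rangle_s|\leq\sigma^2\delta\sup X$ whenever $|t-s|\leq\delta$, this gives $w_N(\delta;[t_0,T])\leq w_B\bigl(\sigma^2\delta\sup X;[0,\sigma^2 T\sup X]\bigr)$. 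Lévy's sharp Brownian modulus of continuity provides a random variable $\mathcal{C}$ with moments of every order (by Fernique) such that $w_B(\eta;[0,S])\leq\mathcal{C}\sqrt{\eta(|\log\eta|+|\log S|+1)}$; H\"older in $L^p$ with exponents chosen so that $(\sup X)^{1/2}$ appears at most to the power $p$, combined with the polynomial-log moments of $\sup X$ available from part~3, then yields $\mathbb{E}[w_N^p(\delta)]^{1/p}\leq C\sqrt{\delta|\log\delta|}$ and hence~\eqref{eq:MODULUS-IntCondEA_HP2-unif}. The sharpness of the logarithmic factor is what makes this step delicate: a direct dyadic-maximum estimate would only give the polynomial rate $\delta^{1/2-1/p}$ because only the $p$-th moment of $\sup X$ is assumed finite, whereas the DDS route transfers the tight $\sqrt{\delta|\log\delta|}$ rate onto a genuine Brownian motion, which has Gaussian tails and all moments, after which the factor involving $\sup X$ reappears only to the power $1/2$.
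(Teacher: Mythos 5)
Your proposal is correct in substance and reaches the same conclusions, but it diverges from the paper in two ways worth noting. For parts 1--3 you apply It\^o's formula to $X^p$ and run Gronwall on the delay blocks, whereas the paper works with the explicit variation-of-constants representation $X(t)=e^{-a(t-t_0)}(X(t_0)+\int(a\gamma+bX_0(\cdot-\tau))e^{a(u-t_0)}du+\sigma\int e^{a(u-t_0)}\sqrt{X}\,dW)$ raised to the power $p$, with BDG applied to the stochastic integral; the two routes are equivalent, though yours has the minor technical wrinkle that $x\mapsto x^p$ is not $C^2$ at the origin for $1\le p<2$ and $X$ may touch zero (no Feller condition is assumed here), so you should regularize via $(x+\epsilon)^p$ and pass to the limit --- routine, since the second-order It\^o term $\tfrac12 p(p-1)\sigma^2 X(X+\epsilon)^{p-2}\le \tfrac12 p(p-1)\sigma^2 X^{p-1}$ stays bounded near zero. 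The real divergence is part 4: the paper does not prove the modulus-of-continuity estimate from scratch but simply invokes Theorem~1 of the cited Fischer--Nappo paper on moments of the modulus of continuity of It\^o processes, verifying its hypotheses by exhibiting exactly the random variables $\zeta=a\overline\gamma+a\sup X+b\sup X(\cdot-\tau)$ and $\xi=\sigma^2\sup X$ that you also identify for your drift and quadratic-variation bounds. Your Dambis--Dubins--Schwarz plus L\'evy-modulus argument is a legitimate self-contained alternative, but its crux --- the existence of a single random constant $\mathcal{C}$ with all moments controlling $w_B(\eta;[0,S])$ \emph{uniformly over the random, path-dependent horizon} $S=\sigma^2(T-t_0)\sup X$ and all $\eta$ --- is precisely the nontrivial content that the cited theorem packages; asserting it via ``Fernique'' requires first showing a.s.\ finiteness of the two-parameter supremum of $|B(t)-B(s)|/\sqrt{|t-s|(|\log|t-s||+\log_+(t\vee s)+1)}$, which needs a genuine chaining/union-bound argument over dyadic scales. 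Your H\"older bookkeeping to keep $\sup X$ at power at most $p$ does work out (choose the conjugate exponent on the $\sup X$ factor below $2$ and absorb the logarithm into a small extra power), so the approach buys a self-contained proof at the cost of essentially re-deriving the external lemma; citing Fischer--Nappo, as the paper does, is the shorter path.
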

\begin{proof}${}$\\
For the first two points, the idea is to show that the statement holds true on the interval $[t_0,t_0+\tau]$ and to repeat the procedure by induction on the intervals $[t_0+(k-1)\tau,t_0+k\tau]$ with $k\leq m$ and $m$ chosen such that $t_0+(m-1)\tau<T\leq t_0+m\tau$.\\
\begin{enumerate}
\item[\emph{1.}]  On the interval $[t_0,t_0+\tau]$, the unique solution satisfies
\begin{equation*}\label{eq:XsolCIR}
\begin{split}X(t)=&\mathit{e}^{-a(t-t_0)}\left(X(t_0)+\int_ {t_0}^t(a\gamma(u)+bX_0(u-\tau))\mathit{e}^{a(u-t_0)}du
+\sigma\int_{t_0}^t
\mathit{e}^{a(u-t_0)}\sqrt{X(u)}\, dW(u)\right).\end{split}\end{equation*}
Consequently,
\begin{align*}
X^p(t)
\leq&3^{p-1}\mathit{e}^{-pa(t-t_0)}\left[X^p(t_0)+\left|\int_ {t_0}^t(a\gamma(u)+bX_0(u-\tau))\mathit{e}^{a(u-t_0)}du\right|^p
\right. \\
&\phantom{xxxxxxxxxxx}\left.
+\left|\sigma\int_{t_0}^t
\mathit{e}^{a(u-t_0)}\sqrt{X(u)}dW(u)\right|^p\right]. 
\end{align*}
We define the stopping time $\tau_R=\inf(t\geq t_0\,:\,X(t)\geq R)$, and have that for all $t\leq t_0+\tau$
\begin{equation}\label{eq:mediaX2}\begin{split}
\mathbb{E}\left[X^p(t\wedge\tau_R)\right]\leq&3^{p-1}\mathit{e}^{-pa (t-t_0)}\left\{\mathbb{E}\left[X^p(t_0)\right]+\mathbb{E}\left[\left|\int_ {t_0}^{t\wedge\tau_R}(a\gamma(u)+bX_0(u-\tau))\mathit{e}^{a(u-t_0)}du\right|^p\right]\right.\\
&\phantom{xxxxxxxxxxxxxx}\left.+\mathbb{E}\left[\left|\sigma\int_{t_0}^{t\wedge\tau_R}
\mathit{e}^{a(u-t_0)}\sqrt{X(u)}dW(u)\right|^p\right]\right\}.
\end{split}\end{equation}
 Burkholder-Davis-Gundy inequality (see, e.g.,~\cite{RevuzYor}) implies that
\begin{align*}
\mathit{e}^{-p
a(t-t_0)}\mathbb{E}\left[\left|\sigma\int_{t_0}^{t\wedge\tau_R}
\mathit{e}^{a(u-t_0)}\sqrt{X(u)}dW(u)\right|^p\right]
&=\mathbb{E}\left[\left|\sigma\int_{t_0}^{t}
\mathit{e}^{-a(t-u)}\sqrt{X(u)}\mathbf{1}_{\{u\leq\tau_R\}}dW(u)\right|^p\right]
\\
&\leq  C_p \sigma^p\mathbb{E}\left[\left(\int_{t_0}^{t}
\mathit{e}^{-2a(t-u)}X(u)\mathbf{1}_{\{u\leq\tau_R\}}\,du\right)^{\frac{p}{2}}\right]
\\
&\leq C_p\sigma^p\left(\mathbb{E}\left[1+\left(\int_{t_0}^{t}  \mathit{e}^{-2a(t-u)}X(u)
\mathbf{1}_{\{u\leq\tau_R\}} \,du\right)^{p}\right]\right)
\\
&\leq  C_p \sigma^p\left(1+\tau^{p-1}\,
\int_{t_0}^t\sup_{u^\prime\in[t_0,t_0+u]}
\mathbb{E}\left[X^p(u^\prime\wedge\tau_R)\right]du\right),
\end{align*}
where $C_p$ is a universal constant.

Condition~\eqref{eq:p-thmoment} and H\"{o}lder   inequality imply that
\begin{align*}
&\mathit{e}^{-p a(t-t_0)}
\mathbb{E}\left[\left(\int_
{t_0}^{t\wedge\tau_R}(a\gamma(u)+bX_0(u-\tau))\mathit{e}^{a(u-t_0)}du\right)^p\right]
\\
\leq&2^{p-1}\left(\int_ {t_0}^{t\wedge\tau_R}a\gamma(u)du\right)^p
+2^{p-1}\mathbb{E}\left[\left(\int_ {t_0}^{t\wedge\tau_R}bX_0(u-\tau)du\right)^p\right]\\
\leq&2^{p-1}a^p\overline{\gamma}^p\tau^p+2^{p-1}b^pK_p\tau^p,
\end{align*}
where $\overline{\gamma}$ is defined by~\eqref{eq:BoundsForGamma}.\\
Consequently \eqref{eq:mediaX2} is upper bounded by
\begin{equation*}\label{eq:mediaX2bis}\begin{split}
\mathbb{E}\left[X^p(t\wedge\tau_R)\right]\leq& 3^{p-1}\mathit{e}^{-pa(t-t_0)}\,\mathbb{E}\left[X^p(t_0)\right]
+6^{p-1}a^p\overline{\gamma}^p\tau^p+6^{p-1}\,b^p\,K_p\,\tau^p\\
&+3^{p-1}  C_p \sigma^p\mathit{e}^{pa\tau}\left(1+\tau^{p-1}\,\int_{t_0}^{t}\sup_{u^\prime\in[t_0,t_0+u]}
\mathbb{E}\left[X^p(u^\prime \wedge \tau_R)\right]du\right).
\end{split}\end{equation*}
By Gronwall inequality and~\eqref{eq:p-thmoment}, letting $R\rightarrow +\infty$ (and hence for $\tau_R\rightarrow +\infty$), we get
\begin{equation*}
\sup_{u\in[t_0,t_0+\tau]}\mathbb{E}\left[X^p(u)\right]du\leq K_1,\quad\text{where $K_1$ is a positive constant.}
\end{equation*}
Repeating this procedure by induction on the intervals $[t_0+(k-1)\tau,t_0+k\tau]$ with $k=2,3...$, we have that the process $X^p(t)$ is integrable for all $t\in[t_0,+\infty)$, with $p$-moments uniformly bounded on bounded intervals.
\item[\emph{2.}]
For $s,\,t$ in the interval $[t_0,T]$, we have that
\[X(t)-X(s)=a\int_{s}^t(\gamma(u)-X(u))du+b\int_{s}^t X(u-\tau)du+\sigma\int_{s}^t\sqrt{X(u)}dW(u),\]
and
\begin{align*}
\mathbb{E}\left[|X(t)-X(s)|^p\right]^{\frac{1}{p}}\leq&
\mathbb{E}\left[\left|a\int_{s}^t(\gamma(u)-X(u))du\right|^p\right]^\frac{1}{p}+
\mathbb{E}\left[\left|b\int_{s}^tX(u-\tau)du\right|^p\right]^\frac{1}{p}\\
&+\mathbb{E}\left[\left|\sigma\int_{s}^t\sqrt{X(u)}dW(u)\right|^p\right]^\frac{1}{p}.
\end{align*}
By H\"{o}lder inequality, the sum of the first two addends is bounded above by
\begin{align*}
&2^{\frac{p-1}{p}}a|t-s|  \left(\sup_{t_0\leq u\leq T}\gamma^p(u)+\sup_{t_0\leq u\leq T}\mathbb{E}\left[|X(u)|^p\right]\right)^\frac{1}{p}
+b|t-s| \left(\sup_{t_0-\tau\leq u\leq T-\tau}\mathbb{E}\left[|X(u)|^p\right]\right)^\frac{1}{p}.
\end{align*}
An upper bound for the last term of the previous inequality, is obtained using Burkholder-Davis-Gundy inequality:
\begin{align*}
\mathbb{E}\left[\left|\int_{s}^t\sqrt{X(u)}dW(u)\right|^p\right]^\frac{1}{p}
\leq&C_p|t-s|^{\frac{1}{2}}\left(\sup_{t_0\leq u\leq t_0+\tau} \mathbb{E}\left[|X(u)|^{\frac{p}{2}}\right]\right)^\frac{1}{p}.
\end{align*}
By part $1$, we get the result.
\item[\emph{3.}] First of all observe that
\begin{align*}
\sup_{s\in [t_0, t]} X^p(s\wedge \tau_R)
\leq&3^{p-1} \left[X^p(t_0)+\left(\int_ {t_0}^t\sup_{u^\prime\in[t_0,u]}\big(a\gamma(u^\prime)+ aX(u^\prime)+bX(u^\prime-\tau)\big) I_{[t_0,\tau_R]}(u)
du\right)^p\right.  \\
&\phantom{xxxxxxx}\left.+\sup_{s\in [t_0, t]}\left|\sigma\int_{t_0}^s I_{[t_0,\tau_R]}(u)
\sqrt{X(u)}dW(u)\right|^p\right].
\end{align*}
Then, since the function $\gamma(t)$ is upper-bounded by $\overline{\gamma}$ on $[t_0, T]$ (see \eqref{eq:BoundsForGamma}),
 by   taking the expectations and, similarly to the proof of point~\emph{1.}, by H\"{o}lder inequality and Burkholder-Davis-Gundy inequality,
 we get that
 \begin{align*}
&\mathbb{E}\big[\sup_{s\in [t_0, t]} X^p(s\wedge \tau_R)\big]
\leq 3^{p-1}  \mathbb{E}\big[X^p(t_0)\big]\\
 &+3^{2(p-1)}(T-t_0)^{p-1} \, \int_ {t_0}^t \big(a^p\overline{\gamma}^p +
 b^p\mathbb{E}\big[\sup_{v^{\prime}\in[t_0-\tau,t_0]} X^p(v^{\prime})\big]+(a+b)^p \mathbb{E}\big[\sup_{u^\prime\in[t_0,u]} X^p(u^\prime\wedge \tau_R)\big] \big) du
\\ &  +  3^{p-1} \sigma^{2p}\left[1+ (T-t_0)^{p-1} \, \int_{t_0}^t
\sup_{u^\prime\in[t_0,u]} \mathbb{E}\big[X^p(u^\prime\wedge \tau_R)\big]du \right].
\end{align*}
Since condition~\eqref{eq:IntCondEA_HP2-unif} implies condition~\eqref{eq:p-thmoment}, we can use point~\emph{1.}, and then by Gronwall inequality, and letting $R$ go to infinity, we get the result, i.e., \eqref{eq:IntCondEA_HP2-unif-t_0T}.
\item[\emph{4.}]
 We can apply Theorem~1  in~\cite{fischer2009moments} and get the bounds~\eqref{eq:MODULUS-IntCondEA_HP2-unif} for the modulus of continuity if we find two random variables  $\zeta$ and $\xi$,   with $\mathbb{E}\left[\zeta^p\right]< \infty$, $\mathbb{E}\left[\xi^{\frac{p}{2}+\varepsilon}\right] < \infty$, for some $\varepsilon >0$, and such that  for any $s,\,t\in [t_0,T]$
$$
\left|
\int_s^t \big|a(\gamma(u) -X(u))+ bX(u-\tau)\big| du \right|\leq\zeta \,|t-s|,
 $$
and
$$
\left|
\int_s^t \sigma^2 X(u) du \right|\leq \xi \,|t-s|.
$$
We can take
$$
\zeta:= a \overline \gamma + a \sup_{u\in[t_0,T]} X(u) + b \sup_{u\in[t_0-\tau,T-\tau]}X(u),\qquad \xi:= \sigma^2 \sup_{u\in[t_0,T]} X(u),
$$
and observe that,  by condition~\eqref{eq:IntCondEA_HP2-unif} and the previous point \emph{3.},
the random variables $\zeta$ and $\xi$  have finite $p$-moments:
$$
\sup_{u\in[t_0-\tau,T]} X^p(u) \leq \max\left( \sup_{u\in[t_0,T]} X^p(u),  \sup_{u\in[t_0-\tau,t_0]} X^p(u)\right).
$$
\end{enumerate}
\end{proof}
As a straightforward consequence of the previous proposition, we now extend the preliminary results of~\cite{dereich2011euler} (see Lemma~3.1 and Lemma~3.2 therein) to our model.
\begin{corollary}\label{cor:meanY}${}$\\
Assume the same conditions of Proposition~\ref{prop:p-thmoment}. If condition~\eqref{eq:p-thmoment} holds, then
\begin{enumerate}
\item the process  $Y(t)=\sqrt{X(t)}$ has $2p^{th}$-moments finite and uniformly bounded on bounded intervals,\\
  \item \begin{equation*}\label{eq:mediaDifferenzagen}
         \left(\mathbb{E}\left[|Y(t)-Y(s)|^{2p}\right]\right)^\frac{1}{p}\leq c_{1,p}|t-s|^\frac{1}{2},\quad\text{for $t$, $s$ $\in[t_0,T]$}.
         \end{equation*}
  \end{enumerate}
  If moreover~\eqref{eq:IntCondEA_HP2-unif} holds, then
  \begin{enumerate} \setcounter{enumi}{2}
  \item \begin{equation*}\label{eq:mediaYgen}
                       \textstyle{\mathbb{E}\left[\sup_{u\in[t_0-\tau,T]}|Y(u)|^{2p}\right]\leq c_{2,p}<+\infty},
        \end{equation*}
  \item \begin{equation*}\label{eq:mediaSupDifferenzagen}
        \mathbb{E}\left[w_Y^{2p}(\delta;[t_0,T])\right]
         \leq c_{3,p}\left(\left|\log\left(\delta\right)
         \right|\delta\right)^p,
         \end{equation*}
\end{enumerate}
where $w_Y(\delta;[t_0,T])$ is the modulus of continuity of the process $Y(t)$, and the constants  $c_{i,p}$, $i=1,2,3$, are defined in Proposition~\ref{prop:p-thmoment}.
\end{corollary}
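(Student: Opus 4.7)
The plan is to reduce each of the four assertions about $Y(t)=\sqrt{X(t)}$ to the corresponding statement about $X(t)$ already proved in Proposition~\ref{prop:p-thmoment}. The only analytic tool needed is the elementary concavity inequality $|\sqrt{x}-\sqrt{y}|\leq \sqrt{|x-y|}$, valid for all $x,y\geq 0$, together with the pointwise identity $|Y(t)|^{2p}=X^p(t)$.

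For point $1$, the identity $\mathbb{E}[|Y(t)|^{2p}]=\mathbb{E}[X^p(t)]$ transfers the uniform $2p$-moment bound on bounded intervals directly from Proposition~\ref{prop:p-thmoment}(1), which only requires hypothesis~\eqref{eq:p-thmoment}. For point $2$, the concavity inequality yields pathwise $|Y(t)-Y(s)|^{2p}\leq |X(t)-X(s)|^p$. Taking expectations, extracting the $p$-th root, and invoking Proposition~\ref{prop:p-thmoment}(2) produces the H\"older-in-mean estimate with exponent $1/2$ and constant $c_{1,p}$.

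For point $3$, the plan is to use $\sup_{u\in[t_0-\tau,T]}|Y(u)|^{2p}=\bigl(\sup_{u\in[t_0-\tau,T]}X(u)\bigr)^p$, splitting the supremum into the segment part on $[t_0-\tau,t_0]$, controlled by the strengthened hypothesis~\eqref{eq:IntCondEA_HP2-unif}, and the forward part on $[t_0,T]$, controlled by Proposition~\ref{prop:p-thmoment}(3). For point $4$, applying the concavity inequality pathwise to every pair $s,t\in[t_0,T]$ with $|t-s|\leq \delta$ gives $w_Y(\delta;[t_0,T])\leq \sqrt{w_X(\delta;[t_0,T])}$, hence $w_Y^{2p}(\delta)\leq w_X^p(\delta)$, and the desired modulus-of-continuity bound then follows directly from Proposition~\ref{prop:p-thmoment}(4).

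No substantive obstacle arises: as the paper announces, the content of the corollary is the clean transfer from $X$ to $Y$ through the concave square-root, and no new stochastic estimate needs to be developed. The only delicate bookkeeping is the alignment of the exponents $p$ versus $2p$ when passing from $X$-estimates to $Y$-estimates; in particular one should keep in mind that working directly on $Y$ via It\^o's formula~\eqref{eq:diffProcessY} would be much harder because the drift of $Y$ contains the singular term $\underline{a}(t)/Y(t)$, which need not admit moment bounds with a simple deterministic Lipschitz-in-time control, so the square-root detour through $X$ is genuinely the right route.
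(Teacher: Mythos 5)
Your proposal is correct and coincides with the paper's own proof, which likewise reduces all four points to Proposition~\ref{prop:p-thmoment} via the inequality $|\sqrt{x}-\sqrt{y}|\leq\sqrt{|x-y|}$ and the identity $|Y(t)|^{2p}=X^p(t)$; you merely spell out the reduction point by point where the paper states it in one line. The exponent bookkeeping you flag is indeed the only delicate spot (it is handled, or glossed over, identically in the paper), so nothing further is needed.
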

\begin{proof}${}$\\
The proof of Corollary~\ref{cor:meanY} follows immediately from points \emph{1., 2., 3.} and \emph{4.} of Proposition~\ref{prop:p-thmoment}, thanks to the inequality $|\sqrt{x}-\sqrt{y}|\leq\sqrt{|x-y|}$.
\end{proof}

\section{Moment Bounds for the Euler Scheme for the process $Y(t)$}\label{sec:MB_IS_Y}
In this section, we deal with the approximation scheme $y_k$, $k\geq 0$ defined in~\eqref{eq:ApproximationScheme}. Following~\cite{dereich2011euler}, our aim is to show that the approximation scheme $y_k$, $k\geq 0$ has second moments uniformly bounded.
 We will use the notations~\eqref{eq:notations} together with
\begin{equation}\label{eq:a-star}
\underline{a}^* =\sup_{t\in[t_0,T]}\underline{a}(t).
\end{equation}
 We recall that the discretization step
$\Delta=\Delta t_k = t_{k+1}-t_{k}=\frac{\tau}{N}$,
 so that the delay time $\tau$ is proportional to $\Delta $, and we can also consider instead of $[t_0,T]$ the time interval $[t_0, t_0+m\tau]$  where $m=\lceil \frac{T-t_0}{\tau}\rceil$, i.e., $m$  is such that $t_0+(m-1)\tau<T\leq t_0+m\tau$.
\begin{lemma}\label{lem:MBIS}${}$\\
If the following condition holds true
\begin{equation*}\label{eq:MBIShp}
\sup_{k\,:\,t_k\in[t_0-\tau,t_0]}\mathbb{E}\left[y^2_{k}\right]
\leq K_0,
\end{equation*}
then, the second moment of the approximation scheme $y_k$ are uniformly bounded on bounded intervals, i.e., for any $T>t_0$, there exists a constant $K_T$ such that
\begin{equation*}\label{eq:MBISth}
\sup_{k\,:\,t_k\leq T} \mathbb{E}[y^2_k] \leq K_T.
\end{equation*}
\end{lemma}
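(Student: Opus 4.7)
The plan is to derive a one-step inequality of the form $\mathbb{E}[y_{k+1}^2]\leq\mathbb{E}[y_k^2]+C\Delta+C'\Delta\,\mathbb{E}[y_{k+1-N}^2]$ and then to iterate it window by window on the sub-intervals $[t_0+(m-1)\tau,t_0+m\tau]$ for $m=1,\dots,\lceil(T-t_0)/\tau\rceil$: on each such window the delayed index $k+1-N$ sits in the preceding window, so its second moment is already controlled at the previous step of the induction and the recursion closes after finitely many iterations.

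For the one-step estimate I would avoid working with the implicit form~\eqref{eq:ApproximationScheme} and instead use its closed form
\[
y_{k+1}=\frac{A_k+\sqrt{A_k^2+B_k}}{2(1+\overline{a}\Delta)},\qquad A_k:=y_k+\overline{\sigma}\,\Delta W_k,\quad B_k:=4(1+\overline{a}\Delta)\bigl(\underline{a}(t_{k+1})+\overline{b}\,y_{k+1-N}^2\bigr)\Delta\geq 0.
\]
Squaring and applying the elementary bound $2|A_k|\sqrt{A_k^2+B_k}\leq 2A_k^2+B_k$ (AM--GM for $A_k\geq 0$, trivial for $A_k<0$) yields the pointwise inequality
\[
y_{k+1}^2\leq\frac{(y_k+\overline{\sigma}\,\Delta W_k)^2}{(1+\overline{a}\Delta)^2}+\frac{2\bigl(\underline{a}(t_{k+1})+\overline{b}\,y_{k+1-N}^2\bigr)\Delta}{1+\overline{a}\Delta}.
\]
Both $y_k$ and $y_{k+1-N}$ are $\mathcal{F}_{t_k}$-measurable, hence independent of $\Delta W_k$ by Assumption~\ref{ass:hp_EA}(i); combining this with $\mathbb{E}[\Delta W_k]=0$, $\mathbb{E}[(\Delta W_k)^2]=\Delta$, the trivial estimates $(1+\overline{a}\Delta)^{-1},(1+\overline{a}\Delta)^{-2}\leq 1$, and definition~\eqref{eq:a-star} produces (after an auxiliary induction on $k$ to ensure square integrability of the $y_k$'s, which also follows from the pointwise bound above)
\[
\mathbb{E}[y_{k+1}^2]\leq\mathbb{E}[y_k^2]+(\overline{\sigma}^2+2\underline{a}^*)\Delta+2\overline{b}\,\mathbb{E}[y_{k+1-N}^2]\,\Delta.
\]

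For the iteration, I would set $\widetilde K_m:=\sup\{\mathbb{E}[y_k^2]:t_k\leq t_0+m\tau\}$ with $\widetilde K_0=K_0$, and sum the one-step inequality from $k=(m-1)N$ up to any $k\leq mN-1$. The delayed index $k+1-N$ then always lies in the $(m-1)$-th window, where $\mathbb{E}[y_{k+1-N}^2]\leq\widetilde K_{m-1}$, and since $N\Delta=\tau$ one obtains the scalar recursion
\[
\widetilde K_m\leq\widetilde K_{m-1}(1+2\overline{b}\tau)+(\overline{\sigma}^2+2\underline{a}^*)\tau.
\]
Iterating this only $\lceil(T-t_0)/\tau\rceil$ times delivers the required finite bound $K_T:=\widetilde K_m$.

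The only delicate point is the passage through the closed form in the first step: had one instead multiplied the implicit relation~\eqref{eq:ApproximationScheme} by $y_{k+1}$, a cross term $\overline{\sigma}\,y_{k+1}\Delta W_k$ would appear whose expectation is awkward because $y_{k+1}$ is not $\mathcal{F}_{t_k}$-measurable, and a naive Cauchy--Schwarz bound on it leaves a non-vanishing residue that spoils the recursion. The explicit quadratic formula neatly replaces this cross term by the benign Brownian variance $\overline{\sigma}^2\Delta$.
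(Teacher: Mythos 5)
Your proof is correct, and its overall architecture --- a one-step second-moment recursion followed by a window-by-window induction over the intervals $[t_0+(m-1)\tau,t_0+m\tau]$ --- is exactly that of the paper. Where you differ is in how the one-step inequality is obtained. You square the explicit closed-form solution of the quadratic equation and apply $2|A_k|\sqrt{A_k^2+B_k}\leq 2A_k^2+B_k$, which is clean and verifiably yields
$\mathbb{E}[y_{k+1}^2]\leq\mathbb{E}[y_k^2]+(\overline{\sigma}^2+2\underline{a}^*)\Delta+2\overline{b}\,\mathbb{E}[y_{k+1-N}^2]\Delta$,
the same recursion the paper arrives at. The paper, however, does precisely what you dismiss as awkward: it multiplies the implicit relation~\eqref{eq:ApproximationScheme} by $y_{k+1}$. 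The cross term you worry about never materializes, because the paper groups $\overline{\sigma}\Delta W_k+y_k$ together \emph{before} applying Young's inequality, writing $(\overline{\sigma}\Delta W_k+y_k)y_{k+1}\leq\tfrac12(\overline{\sigma}\Delta W_k+y_k)^2+\tfrac12 y_{k+1}^2$; the $\tfrac12 y_{k+1}^2$ is absorbed on the left, and expanding the square leaves only $2\overline{\sigma}y_k\Delta W_k+\overline{\sigma}^2(\Delta W_k)^2$, in which $y_k$ \emph{is} $\mathcal{F}_{t_k}$-measurable, so the residue is a genuine martingale difference with zero mean --- no Cauchy--Schwarz and no loss. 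So your closing remark mischaracterizes the implicit route; both derivations work and cost essentially the same. One small advantage of the paper's version is that it produces a pathwise inequality with an explicit martingale-difference remainder $M_k$, which is then reused verbatim in the proof of Proposition~\ref{prop:DNS3.4} (via Burkholder--Davis--Gundy on $\sum_j M_j$) to control $\mathbb{E}[\sup_k|y_k|^{2p}]$; your expectation-level recursion would have to be re-derived in pathwise form there. Your explicit acknowledgement of the square-integrability induction needed to kill the term $\mathbb{E}[y_k\Delta W_k]$ is a point the paper glosses over, and is handled correctly in your argument.
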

\begin{proof}${}$\\
For the ease of the reader, we recall the approximation scheme~\eqref{eq:ApproximationScheme}:
\[
y_{k+1}=y_k+\left(\underline{a}(t_{k+1})\frac{1}{y_{k+1}} - \overline{a} y_{k+1}\right)\Delta t_k+\overline{b} \,\frac{y^2_{k+1-N}}{y_{k+1}}\, \Delta t_k+   \overline{\sigma} \,\Delta W_k,\quad\text{with } \Delta W_k =W(t_{k+1})-W(t_k).
\]
Multiplying both sides by $y_{k+1}$, we obtain
\[y^2_{k+1}=\left(\underline{a}(t_{k+1})+\overline{b}\,y^2_{k+1-N}-\overline{a}\,y^2_{k+1}\right)\Delta t_k+
\left(\overline{\sigma}\,\Delta W_k+y_k\right)y_{k+1},
\]
then, taking into account that $\overline{a}>0$, and that
\[
\left(\overline{\sigma}\,\Delta W_k+y_k\right)y_{k+1}\leq
\frac{1}{2}\left(\overline{\sigma}\,\Delta W_k+y_k\right)^2+\frac{1}{2}y^2_{k+1},
\]
we obtain
\begin{align*}
\frac{y^2_{k+1}}{2}\leq&\left(\underline{a}^*+\overline{b}\,y^2_{k+1-N}\right)\Delta t_k+
\frac{1}{2}\left(\overline{\sigma}\,\Delta W_k+y_k\right)^2,\end{align*}
where $\underline{a}^*$ is defined in~\eqref{eq:a-star}.\\
Adding ad subtracting $\frac{\overline{\sigma}^2}{2}\Delta t_k$ and
multiplying both sides by $2$, we have
\[
y^2_{k+1}\leq\left(2\underline{a}^*+\overline{\sigma}^2\right)\Delta t_k+2\overline{b}y_{k+1-N}^2\Delta t_k
+y_k^2+M_k,
\]
where
$M_k=2\overline{\sigma}y_k\Delta W_k+\overline{\sigma}^2\,\left((\Delta W_k)^2-\Delta t_k\right),$
is a discrete time martingale difference. Consequently, we have that
\begin{equation}\label{eq:MB1}\begin{split}
y^2_{k}&=\sum_{j=0}^{k-1}(y^2_{j+1}-y^2_j)+y^2_0\leq y_0^2+\left(2\underline{a}^*+\overline{\sigma}^2\right) (t_{k}-t_0)+2\overline{b}\sum_{j=1-N}^{k-N}y_j^2\Delta t_j+
\sum_{j=0}^{k-1} M_j,\end{split}\end{equation}
and
\begin{equation*}\label{eq:MB2}
\mathbb{E}\left[y^2_{k}\right]\leq \mathbb{E}\left[y_0^2\right]+\left(2\underline{a}^*+\overline{\sigma}^2\right) (t_{k }-t_0)
+2\overline{b}\sum_{j=1-N}^{k -N}\mathbb{E}\left[y_j^2\right]\Delta t_j.\end{equation*}
In the interval $[t_0,t_0+\tau]$, we have
\[\sup_{k\,:\,t_k\in[t_0,t_0+\tau]}\mathbb{E}\left[y^2_{k}\right]\leq \mathbb{E}\left[y_0^2\right]+\left(2\underline{a}^*+\overline{\sigma}^2+2\overline{b}K_0\right)\tau.\]
By induction, we have the statement.
\end{proof}

Thanks to the following result, we determine moment bounds for the implicit Euler scheme for $Y(t)=Y^{(b)}(t)$.
\begin{proposition}\label{prop:DNS3.4}${}$\\
If the following condition holds
\begin{equation}\label{eq:MBIS_prophp}
\sup_{k\,:\,t_k\in[t_0-\tau,t_0]}\mathbb{E}\left[|y_k|^{2p}\right]\leq K_{2p},
\end{equation}
for all $p\geq 1$, then, for any $T>t_0$, there exists a constant $K_{2p,T}$ such that
\begin{equation}\label{eq:MBIS_PROP}
\mathbb{E}\left[\sup_{k:\, t_0\leq t_k\leq T} |y_k|^{2p}\right]\leq K_{2p,T}.
\end{equation}
\end{proposition}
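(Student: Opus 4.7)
The plan is to raise inequality~\eqref{eq:MB1} from the proof of Lemma~\ref{lem:MBIS} to the $p$-th power, pull the supremum inside the expectation, and close the resulting estimate by a discrete Gronwall argument, organised as an induction on the successive windows $[t_0+(j-1)\tau,t_0+j\tau]$ so that the delay feedback never appears inside the Gronwall step itself. Concretely, from
$$y_k^2 \leq y_0^2 + (2\underline{a}^* + \overline{\sigma}^2)(t_k - t_0) + 2\overline{b} \sum_{i=1-N}^{k-N} y_i^2 \Delta t_i + S_k,$$
with $S_k := \sum_{j=0}^{k-1} M_j$ the running sum of the martingale differences $M_j = 2\overline{\sigma} y_j \Delta W_j + \overline{\sigma}^2((\Delta W_j)^2 - \Delta t_j)$ (note $y_j$ is $\mathcal{F}_{t_j}$-measurable), I would apply $(a_1+\cdots+a_4)^p \leq 4^{p-1}(a_1^p+\cdots+a_4^p)$ to the right-hand side (which is nonnegative since $y_k^2 \geq 0$), take $\sup_{k\leq K}$ (using that the delay sum is monotone in $k$), and finally the expectation.

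The delay sum is controlled by H\"older, $\mathbb{E}\bigl[(\sum_{i=1-N}^{K-N} y_i^2 \Delta)^p\bigr] \leq (T-t_0+\tau)^{p-1} \sum_i \mathbb{E}[y_i^{2p}]\Delta$, which by~\eqref{eq:MBIS_prophp} (or by the inductive bound on the previous window) reduces to a finite constant depending on $T$, $p$, the model parameters, and $K_{2p}$. For the martingale I would split $S_k = S_k^{(1)} + S_k^{(2)}$ with $S_k^{(1)} = 2\overline{\sigma}\sum y_j \Delta W_j$ and $S_k^{(2)} = \overline{\sigma}^2\sum((\Delta W_j)^2-\Delta t_j)$; embed $S_k^{(1)}$ as the continuous stochastic integral $\int_{t_0}^{t_k} 2\overline{\sigma}\,\tilde y(s)\,dW(s)$ of the piecewise-constant process $\tilde y(s) = y_j$ on $[t_j,t_{j+1})$ and apply continuous-time BDG to get $\mathbb{E}[\sup_k |S_k^{(1)}|^p] \leq C_p\,\mathbb{E}\bigl[(\int_{t_0}^{t_K} 4\overline{\sigma}^2 \tilde y^2(s)\,ds)^{p/2}\bigr]$; the iid centered sum $S_k^{(2)}$ has $\mathbb{E}[\sup_k|S_k^{(2)}|^p]$ bounded deterministically by $C(T,p,\overline{\sigma},\tau)$ via Doob and Marcinkiewicz--Zygmund. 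Putting the three bounds together yields an inequality of the form
$$\mathbb{E}\bigl[\sup_{k\leq K} y_k^{2p}\bigr] \leq A(T,p,K_{2p}) + B(T,p)\int_{t_0}^{t_K} \mathbb{E}\bigl[\sup_{\ell:\, t_\ell\leq s} y_\ell^{2p}\bigr]\,ds,$$
to which the discrete Gronwall lemma applies, giving~\eqref{eq:MBIS_PROP}.

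The main obstacle is estimating $\mathbb{E}\bigl[(\int_{t_0}^{t_K}\tilde y^2\,ds)^{p/2}\bigr]$ by a time integral of $\mathbb{E}[\sup_\ell y_\ell^{2p}]$ with constants uniform in $\Delta$. For $p\geq 2$, Jensen on the time integral yields $(\int\tilde y^2\,ds)^{p/2}\leq T^{p/2-1}\int|\tilde y|^p\,ds$ and the crude bound $y^p\leq 1+y^{2p}$ closes it. For $p\in[1,2)$ Jensen goes the wrong way, so I would first localise with the stopping time $\tau_R = \inf\{k : y_k \geq R\}$ to ensure a priori finiteness of $\mathbb{E}[\sup y_{k\wedge\tau_R}^{2p}]$, absorb an $\varepsilon$-multiple of this quantity from the right into the left via Young's inequality, and let $R\to\infty$ by monotone convergence (using $\tau_R\to\infty$ a.s., which follows from Lemma~\ref{lem:MBIS}). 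The induction over windows of length $\tau$ is what makes this closure effective: on each window the delay sum refers only to $y_i$ in the previous, already-controlled window, so the Gronwall step is not polluted by a self-referential delay term.
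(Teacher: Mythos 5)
Your proposal is correct and shares the paper's overall skeleton --- both start from inequality \eqref{eq:MB1}, control the delay sum by H\"older using \eqref{eq:MBIS_prophp} (or the bound from the previous window), treat the martingale part through its embedding as a continuous stochastic integral followed by Burkholder--Davis--Gundy, and induct over the windows $[t_0+(h-1)\tau,t_0+h\tau]$ --- but the two arguments close the resulting estimate differently. You keep the power $2p$ on both sides: after BDG you bound $\mathbb{E}\bigl[(\int\tilde y^2\,ds)^{p/2}\bigr]$ by $\int\mathbb{E}\bigl[\sup_{\ell:\,t_\ell\le s}y_\ell^{2p}\bigr]\,ds$ (via Jensen and $y^p\le 1+y^{2p}$ when $p\ge2$) and invoke Gronwall, resorting to stopping-time localization and Young absorption when $p\in[1,2)$. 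The paper instead bounds the BDG term by $\sup_t\mathbb{E}[|m_t|^p]$, i.e.\ by the \emph{$p$-th} moments of the $y_k$ rather than the $2p$-th, which yields \eqref{eq:MB5-QUATER}: an estimate of $\mathbb{E}[\sup_k|y_k|^{2p}]$ in terms of $\sup_k\mathbb{E}[|y_k|^{p}]$ over the same window. This removes the self-reference entirely, so neither Gronwall nor localization is needed; the proof becomes a power-doubling bootstrap over $p=2^\ell$ with Lemma~\ref{lem:MBIS} as the base case $\ell=0$ (general $p$ then follows by monotonicity of $L^p$ norms). Your route is the more standard one and treats every $p\ge1$ in a single pass; the paper's is lighter per step but only directly produces dyadic exponents. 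One point to tighten in your version: for $p\ge2$ the Gronwall step also requires the a priori finiteness of $\mathbb{E}\bigl[\sup_{\ell\le k}y_\ell^{2p}\bigr]$ for each fixed $k$, which you only arrange (via localization) for $p<2$; this finiteness does hold --- e.g.\ by a crude induction on $k$ using the explicit solution of \eqref{eq:ApproximationScheme} together with \eqref{eq:MBIS_prophp} --- or you can simply run your stopping-time localization for all $p$, but it should be stated.
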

\begin{proof}${}$\\
The idea is to start with the first interval $[t_0,t_0+\tau]$, and  show that
\begin{equation}\label{eq:MBIS_PROP-J-1}
\mathbb{E}\left[\sup_{k:\, t_k\in[t_0,t_0+\tau]} |y_k|^{2p}\right]<+\infty,\quad \text{for $p=2^\ell$},
\end{equation}
by using induction on $\ell$.\\

From \eqref{eq:MB1}, for $k\geq 1$, we obtain that
\begin{align*}
y_k^2\leq&y_0^2+
\left(2\underline{a}^*+\overline{\sigma}^2\right)\sum_{j=1}^k\Delta t_{j-1}+2\overline{b}\sum_{j=1}^ky_{j-N}^2\Delta t_{j-1}+2\overline{\sigma}\sum_{j=1}^ky_{j-1}\Delta W_{j-1}\\&+\overline{\sigma}^2\,\sum_{j=1}^k\left((\Delta W_{j-1})^2-\Delta t_{j-1}\right).
\end{align*}
Recalling that
\[(\Delta W_j)^2-\Delta t_j=2\int_{t_j}^{t_{j+1}}W(s)dW(s),\]
we obtain
\begin{equation*}\label{eq:MB3}\begin{split}
\sup_{k\,:\,t_k \in[t_0,t_0+\tau]} |y_k|^2\leq
&c_1+c_2\frac{\tau}{N}\sum_{j=1}^N y_{j-N}^2+c_3\sup_{k\,:\,t_k \in[t_0,t_0+\tau]}\left|\int_{t_0}^{t_k}m_t dW_t\right|,\\
\end{split}\end{equation*}
where
\begin{equation}\label{eq:MB4}m_t=2\overline{\sigma}y_k+2\overline{\sigma}^2W_t,\quad t\in[t_k,t_{k+1}).\end{equation}
Raising to the  $p$-power,  we get
\[
\sup_{k\,:\,t_k \in[t_0,t_0+\tau]} |y_k|^{2p}\leq 3^{p-1}\left(c^p_1+c^p_2\left(\frac{\tau}{N}\sum_{j=1}^N y_{j-N}^2\right)^p+c^p_3\sup_{k\,:\,t_k \in[t_0,t_0+\tau]}\left|\int_{t_0}^{t_k}m_t dW_t\right|^p\right).
\]
Consequently, we have
\begin{equation}\label{eq:MB5}\begin{split}
\mathbb{E}\left[\sup_{k\,:\,t_k \in[t_0,t_0+\tau]}|y_k|^{2p} \right]\leq& 3^{p-1}\left(c^p_1+c^p_2\mathbb{E}\left[\left(\frac{\tau}{N}\sum_{j=1}^N y_{j-N}^2\right)^p\right]\right.\\
&\left.+c^p_3\mathbb{E}\left[\sup_{k\,:\,t_k \in[t_0,t_0+\tau]}\left|\int_{t_0}^{t_k}m_t dW_t\right|^p\right]\right).\end{split}
\end{equation}

 By H\"{o}lder inequality applied to the measure $\frac{1}{N}\sum_{j=1}^N \delta_j(dx)$ and by condition~\eqref{eq:MBIS_prophp}, we obtain
\begin{align}\label{eq:MB5-BIS}
\mathbb{E}\left[\left(\frac{\tau}{N}\sum_{j=1}^N y_{j-N}^2\right)^p\right]&
\leq\tau^p \mathbb{E}\left[\frac{1}{N}\sum_{j=1}^N y_{j-N}^{2p}\right]
\leq
\tau^p\sup_{k\,:\,t_k\in[t_0-\tau,t_0]}\mathbb{E}\left[|y_k|^{2p}\right]<\infty.
\end{align}

By Burkholder-Davis-Gundy inequality  and by \eqref{eq:MB4}, we get
\begin{align}\notag
\mathbb{E}\left[\sup_{k\,:\,t_k \in[t_0,t_0+\tau]}\left|\int_{t_0}^{t_k}m_t dW_t\right|^p\right]&\leq
C_p\mathbb{E}\left[\left(\int_{t_0}^{t_0+\tau}|m_t|^2 dt\right)^{\frac{p}{2}}\right]\\
\notag
&\leq C_p\,\tau^{\frac{p}{2}}\sup_{t\in[t_0,t_0+\tau]}\mathbb{E}\left[|m_t|^p\right]\\
&\leq C_p\,2^{2p-1}\,\overline{\sigma}^{p}\,\tau^{\frac{p}{2}}\left(\overline{\sigma}^{p}\,\tau+\sup_{k\,:\,t_k \in[t_0,t_0+\tau]}\mathbb{E}\left[|y_k|^p\right]\right).\label{eq:MB5-TER}
\end{align}

By~\eqref{eq:MB5}, \eqref{eq:MB5-BIS} and \eqref{eq:MB5-TER}, for some constant $c_i(p,\tau)$, we get
\begin{align}\label{eq:MB5-QUATER}
\mathbb{E}\left[\sup_{k\,:\,t_k\in[t_0,t_0+\tau]}|y_k|^{2p}\right]
&\leq c_1(p,\tau)+c_2(p,\tau)\sup_{k\,:\,t_k\in[t_0,t_0+\tau]}
\mathbb{E}\left[|y_k|^p\right]+c_3(p,\tau)\sup_{k\,:\,t_k\in[t_0-\tau,t_0]}\mathbb{E}\left[|y_k|^{2p}\right].
\end{align}

The case $p=1$ is then obvious. Taking $p=2$, we obtain
\[
\mathbb{E}\left[\sup_{k\,:\,t_k\in[t_0,t_0+\tau]}|y_k|^{4}\right]\leq c_1(2,\tau)
+c_2(2,\tau)\sup_{k\,:\,t_k\in[t_0,t_0+\tau]}
\mathbb{E}\left[|y_k|^2\right]+c_3(2,\tau)\sup_{k\,:\,t_k\in[t_0-\tau,t_0]}\mathbb{E}\left[|y_k|^{4}\right].
\]

Lemma~\ref{lem:MBIS} and assumption~\eqref{eq:MBIS_prophp} for $p=2$, imply
\[\mathbb{E}\left[\sup_{k\,:\,t_k\in[t_0,t_0+\tau]}|y_k|^{4}\right]<+\infty.\]
By induction on $\ell$, using~\eqref{eq:MB5-QUATER}, we have the statement~\eqref{eq:MBIS_PROP-J-1} in the first interval.\\

Finally, by induction on the intervals $[t_0+k\tau,t_0+(k+1)\tau]$ with $k=1,2,\ldots$, we get the thesis.\\
\end{proof}

\section{Error bound for the Implicit Euler Scheme}\label{sec:EB_IS_Y}
Let $f\,:\,\mathbb{R}_+\times\mathbb{R}_+\times\mathbb{R}_+\rightarrow\mathbb{R}_+$ be the functions defined as follows
\begin{equation}\label{eq:def_f(t,y,z)}
f(t, y, z )= \underline{a}(t) \frac{1}{y} -\overline{a}
y+\overline{b}\frac{z^2}{y},
\end{equation}
then, we can represent the approximation scheme as follows
\begin{equation}\label{eq:AS_f}
y_{k+1} =y_k+ f(t_{k+1}, y_{k+1}, y_{k+1-N}) \Delta t_k + \sigma \Delta
W_k.\end{equation}

\begin{lemma}\label{lem:DisugualianzePerf}${}$\\
For all $y, y^\prime, z, z^\prime >0$ and for all  $t,\,t^\prime\geq t_0$, the function $f$, defined above, satisfies the following inequalities:
\begin{enumerate}
  \item
  \begin{equation}\label{eq:FirstInequalityForf}
  (y-y^\prime)\,\left[f(t,y,z)-f(t,y^\prime,z^\prime)\right]\leq \overline{b} (y-y^\prime)\,\frac{1}{y^\prime}\, [ z^2  -  (z^\prime)^2  ];
  \end{equation}
  \item
  \begin{equation}\label{eq:SecondInequalityForf}
  |f(t,y,z)-f(t^\prime,y^\prime,z^\prime)|\leq| \underline{a}(t)- \underline{a}(t^\prime)| \, \frac{1}{y^\prime}
  + \left((\underline{a}(t)+\overline{b}  \,z^2 )\,\frac{1}{y y^\prime} +\overline{a} \right)
\left|  y   -  y^\prime \right| +
 \overline{b}  \,\frac{1}{y^\prime}\,  | z^2   -  (z^\prime)^2   |.
  \end{equation}
\end{enumerate}
\end{lemma}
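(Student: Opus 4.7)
The plan is to prove both inequalities through a single algebraic identity for the difference $f(t,y,z)-f(t',y',z')$, which immediately yields (2) by the triangle inequality and (1) by a sign argument after multiplying by $(y-y')$. The key observation is that the only non-polynomial piece of $f$ is $1/y$, and it is controlled by the elementary identities
\[
\frac{1}{y}-\frac{1}{y'}=\frac{y'-y}{yy'},\qquad \frac{z^{2}}{y}-\frac{(z')^{2}}{y'}=z^{2}\,\frac{y'-y}{yy'}+\frac{z^{2}-(z')^{2}}{y'}.
\]

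First, I would decompose $\underline{a}(t)/y-\underline{a}(t')/y'$ by adding and subtracting $\underline{a}(t)/y'$, giving $\underline{a}(t)(y'-y)/(yy')+(\underline{a}(t)-\underline{a}(t'))/y'$, and similarly handle the $\overline{b}\,z^{2}/y$ term using the second identity above. Assembling these pieces with $-\overline{a}(y-y')$ produces the exact identity
\[
f(t,y,z)-f(t',y',z')=\frac{\underline{a}(t)-\underline{a}(t')}{y'}-(y-y')\!\left(\frac{\underline{a}(t)+\overline{b}\,z^{2}}{yy'}+\overline{a}\right)+\overline{b}\,\frac{z^{2}-(z')^{2}}{y'}.
\]
Applying the triangle inequality to this identity and factoring $|y-y'|$ out of the middle term yields (2) directly.

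For (1), I would set $t=t'$ so that the first term on the right vanishes, then multiply the identity by $(y-y')$. The middle term becomes $-(y-y')^{2}\bigl[(\underline{a}(t)+\overline{b}\,z^{2})/(yy')+\overline{a}\bigr]$, which is nonpositive because all of $y,y',\underline{a}(t),\overline{a},\overline{b},z^{2}$ are nonnegative. Dropping this nonpositive contribution leaves the right-hand side of (1). There is no real obstacle here beyond careful bookkeeping of signs and denominators; the essential structural point worth emphasizing is that (1) is a one-sided monotonicity-type estimate in the $y$-variable (analogous to a one-sided Lipschitz condition), with the delay variable $z$ providing the only term that can carry a positive sign.
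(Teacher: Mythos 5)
Your proof is correct and follows essentially the same route as the paper: the same add-and-subtract decomposition of the $1/y$ and $z^2/y$ terms, dropping the nonpositive $-(y-y')^2$ contributions for (1), and the triangle inequality for (2). Organizing it as a single exact identity from which both inequalities follow is a minor (and clean) repackaging, not a different argument.
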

\begin{proof}${}$\\
The results follow by simple computations:
\begin{enumerate}
  \item[\emph{1.}]
\begin{align*}
&(y-y^\prime)\,\left[f(t,y,z)-f(t,y^\prime,z^\prime)\right]=
(y-y^\prime)\,\left[ \underline{a}(t) \frac{1}{y} -\overline{a}
y+\overline{b}\frac{z^2}{y} - \big(\underline{a}(t) \frac{1}{y^\prime} -\overline{a}
y^\prime +\overline{b}\frac{(z^\prime)^2}{y^\prime} \big)\right]
\\=& (y-y^\prime)\,\left[ \underline{a}(t) \frac{1}{y}   -  \underline{a}(t) \frac{1}{y^\prime}
\right]- \overline{a}(y-y^\prime)\,\left[
y -
y^\prime  \right] + \overline{b} (y-y^\prime)\,\left[\frac{z^2}{y} \mp\frac{z^2}{y^\prime} - \frac{(z^\prime)^2}{y^\prime} \right]
\\ \leq & 0 - \overline{a}  (y-y^\prime)^2 + \overline{b} (y-y^\prime)\,\left[\frac{z^2}{y} - \frac{z^2}{y^\prime}\right]
+ \overline{b} (y-y^\prime)\,\left[\frac{z^2}{y^\prime} - \frac{(z^\prime)^2}{y^\prime} \right]
\\
\leq & 0 + \overline{b} (y-y^\prime)\,\frac{1}{y^\prime}\, [ z^2  -  (z^\prime)^2  ]
\end{align*}
  \item[\emph{2.}] \begin{align*}
&|f(t,y,z)-f(t^\prime,y^\prime,z^\prime)|
=\left| \underline{a}(t) \frac{1}{y} -\overline{a}
y+\overline{b}\frac{z^2}{y} - \big(\underline{a}(t^\prime) \frac{1}{y^\prime} -\overline{a}
y^\prime +\overline{b}\frac{(z^\prime)^2}{y^\prime} \big)\right|
\\=&
| \underline{a}(t)- \underline{a}(t^\prime)| \, \frac{1}{y^\prime}
  + \underline{a}(t)\,\left|  \frac{1}{y}   -  \frac{1}{y^\prime}
\right| +\overline{a}\,\left|
y -
y^\prime  \right| + \overline{b} \,\left|\frac{z^2}{y} \mp\frac{z^2}{y^\prime} - \frac{(z^\prime)^2}{y^\prime} \right|
\\\leq &
| \underline{a}(t)- \underline{a}(t^\prime)| \,  \frac{1}{y^\prime}
   +
\underline{a}(t)\,\frac{1}{y y^\prime}\,\left|  y   -  y^\prime
\right| +\overline{a}\,\left|
y -
y^\prime  \right| +   \overline{b}  \,z^2 \,\frac{1}{y y^\prime}\,\left|  y   -  y^\prime
\right|
+ \overline{b} \,\frac{1}{y^\prime} \,\left|z^2 - (z^\prime)^2 \right|
\\
=&
| \underline{a}(t)- \underline{a}(t^\prime)| \, \frac{1}{y^\prime}
 + \left((\underline{a}(t)+\overline{b}  \,z^2 )\,\frac{1}{y y^\prime} +\overline{a} \right)
\left|  y   -  y^\prime \right| +
 \overline{b}  \,\frac{1}{y^\prime}\,  | z^2   -  (z^\prime)^2   |.
\end{align*}
\end{enumerate}
\end{proof}

Now, we make the following further standing assumptions.
\begin{assumptions}\label{ass:InitialValue}${}$
\begin{description}
        \item [(i)] The process $X_0(t)$ is a Borel measurable for $t\in[t_0-\tau,t_0]$ such that for any $p>0$
\begin{equation}\label{eq:IntCondP2}
\textstyle{\mathbb{E}\left[\sup_{t\in[t_0-\tau,t_0]}|X_0(t)|^{p}\right]<+\infty}.
\end{equation}
        \item[(ii)] For any $0< q<\frac{2a\underline{\gamma}}{\sigma^2}$
  \begin{equation*}\label{eq:IntCondP3}
 \mathbb{E}\left[\frac{1}{X^q_0(t_0)}\right] <+\infty.\end{equation*}
 \item [(iii)] the function $\gamma(t)$ is H\"{o}lder continuous of order $\frac{1}{2}$ , i.e.,
\begin{equation}\label{eq:holdercond}
|\gamma(t)-\gamma(s)|\leq L|t-s|^{\frac{1}{2}}, \qquad t,s \in [t_0,T];
\end{equation}
  \item [(iv)] the parameters $a$, $\underline{\gamma}$ and $\sigma$ satisfy the following condition
         \begin{equation*}\label{eq:StrongParCond}
         \frac{2\,a\underline{\gamma}}{\sigma^2}>
         \frac{1+\left\lceil\tfrac{T-t_0}{\tau}\right\rceil}{2};
         \end{equation*}
\end{description}
\end{assumptions}

Now, we show that the numerical scheme converges on the discretization points. The proof is an extension of Proposition $3.3$ in~\cite{dereich2011euler}.

\begin{proposition}\label{prop:EB_IS_Y}${}$\\
Beside Assumptions~\ref{ass:hp_EA} and~\ref{ass:InitialValue}, assume $b>0$, condition~\eqref{eq:MBIS_prophp},  and
\begin{equation}\label{eq:HPappErr}
\left(\mathbb{E}\left[|E^Y_{(0)}|^p\right]\right)^{\frac{1}{p}}\leq C_{p,0}^Y\left(\tfrac{\tau}{N}\right)^{\frac{1}{2}}= C_{p,0}^Y \Delta ^{\frac{1}{2}},\quad\text{for all $p\geq 1$},
\end{equation}
where $E^Y_{(0)}:=\sup_{t_k\in [t_0-\tau, t_0]}|Y(t_k)-y_k|$
is the initial error.\\
Then, for any $p\in\left[1,\frac{2\,a\underline{\gamma}}{\sigma^2}\frac{2}{1+\left\lceil\frac{T-t_0}{\tau}\right\rceil}\right)$,
\begin{equation*}\label{eq:EB_IS_Y-p}
\left(\mathbb{E}\left[\textstyle{\sup_{k\,:\,t_k\in[t_0,T]}}|Y(t_k)-y_k|^p\right]\right)^{\frac{1}{p}} \leq
C^Y\left(\tfrac{\tau}{N}\right)^{\frac{1}{2}}=C^Y \Delta ^{\frac{1}{2}}.
\end{equation*}
and
\begin{equation*}\label{eq:EB_IS_X-p}
\left(\mathbb{E}\left[\textstyle{\sup_{k\,:\,t_k\in[t_0,T]}}|X(t_k)-x_k|^p\right]\right)^{\frac{1}{p}} \leq
C^X\left(\tfrac{\tau}{N}\right)^{\frac{1}{2}}=C^X \Delta ^{\frac{1}{2}}.
\end{equation*}
\end{proposition}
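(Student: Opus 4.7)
The plan is to adapt the strategy of Proposition~3.3 in~\cite{dereich2011euler} to accommodate the delay. Setting $e_k:=Y(t_k)-y_k$, I first subtract the scheme~\eqref{eq:AS_f} from the integrated form of~\eqref{eq:diffProcessY} to obtain the one-step recursion
$$
e_{k+1}-e_k=\Delta t_k\bigl[f(t_{k+1},Y(t_{k+1}),Y(t_{k+1-N}))-f(t_{k+1},y_{k+1},y_{k+1-N})\bigr]+B_{k+1},
$$
where $B_{k+1}:=\int_{t_k}^{t_{k+1}}\bigl[f(u,Y(u),Y(u-\tau))-f(t_{k+1},Y(t_{k+1}),Y(t_{k+1-N}))\bigr]\,du$ is a local time-regularity remainder. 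Multiplying by $e_{k+1}$, exploiting $e_{k+1}(e_{k+1}-e_k)\geq\frac{1}{2}(e_{k+1}^2-e_k^2)$, and applying the monotonicity inequality~\eqref{eq:FirstInequalityForf} of Lemma~\ref{lem:DisugualianzePerf}, the implicit-drift contribution collapses into the delayed-error term $\overline{b}\,e_{k+1}\widetilde{e}_{k+1-N}S_{k+1-N}/y_{k+1}$, with $\widetilde{e}_{k+1-N}:=Y(t_{k+1-N})-y_{k+1-N}$ and $S_{k+1-N}:=Y(t_{k+1-N})+y_{k+1-N}$. Telescoping then yields
$$
\frac{1}{2}e_k^2\leq\frac{1}{2}e_0^2+\overline{b}\sum_{j=0}^{k-1}\Delta t_j\,\frac{e_{j+1}\widetilde{e}_{j+1-N}S_{j+1-N}}{y_{j+1}}+\sum_{j=0}^{k-1}e_{j+1}B_{j+1}.
$$

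Next I would take $\sup_k$ over a window of length $\tau$, raise to the $p$-th power and take expectation, and estimate each piece separately: a sharp bound $\|B_{k+1}\|_{L^q}=O(\Delta^{3/2})$ coming from~\eqref{eq:SecondInequalityForf}, the $\frac12$-H\"older regularity~\eqref{eq:holdercond} of $\gamma$, the H\"older regularity of $Y$ in Corollary~\ref{cor:meanY}, and the negative-moment bound~\eqref{eq:media-int-1suXalla-p} for $1/Y$; a Burkholder--Davis--Gundy control on the martingale pieces that emerge after rewriting $\sum_j e_{j+1}B_{j+1}$; and positive- plus negative-moment bounds on $y_k$, using Proposition~\ref{prop:DNS3.4} together with a discrete analogue of Lemma~3.2 in~\cite{dereich2011euler} that gives $\sup_k\mathbb{E}[y_k^{-q}]<\infty$ for $q$ in the range dictated by the generalized Feller condition. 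On the initial window $[t_0,t_0+\tau]$ the quantity $\widetilde{e}_{j+1-N}$ reduces to the initial error $E^Y_{(0)}$, which is $O(\sqrt{\Delta})$ by~\eqref{eq:HPappErr}, and a discrete Gronwall argument then produces $\mathbb{E}[\sup_k|e_k|^p]^{1/p}=O(\sqrt{\Delta})$. The proof proceeds by induction on $\ell=1,\dots,\lceil(T-t_0)/\tau\rceil$, using at each step the $L^p$-estimate obtained on the previous sub-interval of length $\tau$ to control $\widetilde{e}_{j+1-N}$ on the current one.

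The principal obstacle is the factor $1/y_{j+1}$ generated by the monotonicity inequality: after splitting $e_{j+1}\widetilde{e}_{j+1-N}S_{j+1-N}/y_{j+1}$ by H\"older, controlling it requires uniform integrability of $y_{j+1}^{-q}$, and iterating through $\lceil(T-t_0)/\tau\rceil$ delay windows consumes one factor of integrability at each pass. The threshold on the admissible $q$ is dictated by the generalized Feller condition and by Assumption~\ref{ass:InitialValue}(ii), so each passage through a delay window shrinks the admissible range of $p$; this is precisely what forces the restriction $p<\tfrac{2a\underline{\gamma}}{\sigma^2}\cdot\tfrac{2}{1+\lceil(T-t_0)/\tau\rceil}$ and motivates the strong parameter condition in Assumption~\ref{ass:InitialValue}(iv), which ensures that integrability is not exhausted before reaching $T$. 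Finally, the $X$-error bound follows from the factorisation $X(t_k)-x_k=(Y(t_k)-y_k)(Y(t_k)+y_k)$ combined with Cauchy--Schwarz and the $2p$-moment bounds for $Y$ in Corollary~\ref{cor:meanY} and for $y_k$ in Proposition~\ref{prop:DNS3.4}.
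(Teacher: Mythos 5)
Your overall architecture matches the paper's: the error recursion for $e_k=Y(t_k)-y_k$, the monotonicity inequality~\eqref{eq:FirstInequalityForf} to kill the non-delayed drift contribution, telescoping, a local error of order $\Delta^{3/2}$ controlled via~\eqref{eq:SecondInequalityForf}, \eqref{eq:holdercond}, the H\"older regularity of $Y$ and negative moments of $X$, the induction over delay windows $J_h$ with an admissible range of $p$ that shrinks at each pass (which is exactly where the threshold $p<\tfrac{2a\underline{\gamma}}{\sigma^2}\tfrac{2}{1+\lceil(T-t_0)/\tau\rceil}$ comes from), and the transfer to the $X$-error by the factorisation $X(t_k)-x_k=(Y(t_k)-y_k)(Y(t_k)+y_k)$. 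All of that is faithful to the paper.

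There is, however, one genuine gap, and it sits precisely at the point you yourself flag as ``the principal obstacle''. You apply the monotonicity inequality so that the residual delayed term carries the factor $1/y_{j+1}$, i.e.\ a negative power of the \emph{discrete scheme}, and you then invoke ``a discrete analogue of Lemma~3.2 in~\cite{dereich2011euler} that gives $\sup_k\mathbb{E}[y_k^{-q}]<\infty$''. No such result is proved in this paper, it is not what Lemma~3.2 of~\cite{dereich2011euler} says (that lemma concerns the continuous process), and it is not a routine extension: from the explicit formula for $y_{k+1}$ one sees that $1/y_{k+1}$ can be of order $\Delta^{-1/2}$ on the event that the Gaussian increment is large and negative, so uniform-in-$\Delta$ negative moments of the drift-implicit scheme are a substantive technical issue (this is essentially why~\cite{alfonsi2013strong} needs extra restrictions on the parameters). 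The paper sidesteps the problem entirely: inequality~\eqref{eq:FirstInequalityForf} is applied with the roles of $Y(t_{k+1})$ and $y_{k+1}$ arranged so that the surviving term reads $\overline{b}\,e_{k+1}\,\bigl[Y^2(t_{k+1-N})-y^2_{k+1-N}\bigr]/Y(t_{k+1})$, i.e.\ the denominator is the \emph{exact solution}, whose negative moments are controlled by the comparison with the classical CIR process (Proposition~\ref{prop:confronto}) together with Lemma~\ref{lem:exBD} for all orders below $2a\underline{\gamma}/\sigma^2$. Your proof as written would stall at this step unless you either switch to the paper's choice of denominator or supply a genuine (and nontrivial) proof of uniform negative-moment bounds for $y_k$. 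A minor additional remark: no Burkholder--Davis--Gundy argument and no discrete Gronwall lemma are needed for the error sum itself; the paper simply bounds $\sum_k|e_{k+1}||r_k|\le\sup_j|e_j|\sum_k|r_k|$ and uses $\|r_k\|_p=O(\Delta^{3/2})$ summed over $N=\tau/\Delta$ terms.
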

\begin{proof}${}$\\
First of all, since for every $p>1$
\[\textstyle{\mathbb{E}\left[\sup_{k\,:\,t_k\in[t_0,T]}|Y(t_k)-y_k|\right]} \leq\left(\textstyle{\mathbb{E}\left[\sup_{k\,:\,t_k\in[t_0,T]}|Y(t_k)-y_k|^p\right]}\right)^{\frac{1}{p}},\]
it is sufficient to prove the statement for $p>1$.\\
We introduce the following notations.
Let $e_k$ be the sequence of the approximation errors defined as follows
\begin{equation}\label{eq:ErrorSequence}
e_k:=Y(t_k)-y_k,
\end{equation}
and let $E^Y_{(h)}$ be the approximation error on the time interval $J_h=[t_0+(h-1)\tau,t_0+h\tau] $, that is
\begin{equation*}\label{eq:ErrorInterval}
E^Y_{(h)}:=\sup_{t_k\in J_h}|Y(t_k)-y_k|=\sup_{k=(h-1)N,\ldots,hN}|e_k|,\quad\text{ with $h\geq 0$.}
\end{equation*}
Similarly, let $E^X_{(h)}$ be the approximation error on the time interval $J_h$, that is,
\begin{equation*}\label{eq:ErrorIntervalEX}
E^X_{(h)}:=\sup_{t_k\in J_h}|X(t_k)-x_k|=\sup_{k=(h-1)N,\ldots,hN}|X(t_k)-x_k|,\quad\text{ with $h\geq0$.}
\end{equation*}

For $0\leq h\leq \left\lceil\frac{T-t_0}{\tau}\right\rceil$, we consider the following inequalities
\begin{align*}
\left(\mathcal{I}_{(h)}^Y\right):\quad \text{for all $p\in\left(1,\frac{2\,a\underline{\gamma}}{\sigma^2}\,\frac{1}{1+\frac{h-1}{2}}\right)$, there exists a constant  $C_{p,h}^Y$ such that } \|E^Y_{(h)}\|_p\leq C_{p,h}^Y\left(\frac{\tau}{N}\right)^{\frac{1}{2}},\\
 \intertext{and}
\left(\mathcal{I}_{(h)}^X\right):\quad \text{for all $p\in\left(1,\frac{2\,a\underline{\gamma}}{\sigma^2}\,\frac{1}{1+\frac{h-1}{2}}\right)$, there exists a constant  $C_{p,h}^X$ such that } \|E^X_{(h)}\|_p\leq C_{p,h}^X\left(\frac{\tau}{N}\right)^{\frac{1}{2}}.\\
\end{align*}
The idea is to prove the following inequalities chain
\begin{equation}\label{eq:Implication}
\left(\mathcal{I}_{(h)}^Y\right)\quad\Longrightarrow\quad\left(\mathcal{I}_{(h)}^X\right)
\quad\Longrightarrow\quad\left(\mathcal{I}_{(h+1)}^Y\right).
\end{equation}
Then the thesis is achieved, when we get inequality $\left(\mathcal{I}_{(h)}^X\right)$ for $h$ such that $T\in J_h$, i.e.,  $h=\lceil\frac{T-t_0}{\tau}\rceil$.\\

We start proving the first implication.\\
By the following equalities
\[|X(t_k)-x_k|=|Y^2(t_k)-y^2_k|=|Y(t_k)-y_k||Y(t_k)+y_k|,\]
we have that, for any $q>1$,
\begin{align*}
\left\|E^X_{(h)}\right\|_{p}\leq&\left\|E^Y_{(h)}\right\|_{pq}
\left\|\sup_{k\in\{(h-1)N,\ldots,hN\}}|Y(t_k)+y_k|\right\|_{p\frac{q}{q-1}}.\\
\intertext{Since $p\in\left(1,\frac{2\,a\underline{\gamma}}{\sigma^2}\,\frac{1}{1+\frac{h-1}{2}}\right)$, we can take a $q>1$ such that $pq\in\left(1,\frac{2\,a\underline{\gamma}}{\sigma^2}\,\frac{1}{1+\frac{h-1}{2}}\right)$, obtaining}
\left\|E^X_{(h)}\right\|_{p}\leq & C_{pq,h}^Y\left(\frac{\tau}{N}\right)^\frac{1}{2}\left(\left\|\sup_{k\in\{(h-1)N,\ldots,hN\}}|Y(t_k)|\right\|_{p\frac{q}{q-1}}+
\left\|\sup_{k\in\{(h-1)N,\ldots,hN\}}|y_k|\right\|_{p\frac{q}{q-1}}\right),\\
\end{align*}
and consequently, by Corollary~\ref{cor:meanY} and Proposition~\ref{prop:DNS3.4} (see conditions~\eqref{eq:IntCondP2} and~\eqref{eq:MBIS_prophp}, respectively) there exists a constant $C^X_{p,h}$ such that $\left(\mathcal{I}_{(h)}^X\right)$ holds.\\

Before proving the second implication in~\eqref{eq:Implication} we obtain a recursive formula  for the error sequence~$e_k$, defined in~\eqref{eq:ErrorSequence}. Taking into account the implicit discretization scheme in the form~\eqref{eq:AS_f} and the integral form of Eq.~\eqref{eq:diffProcessY} for $Y$, we have
\begin{align*}
e_0=&Y(t_0)-y_0,\\
e_{k+1}=&e_k+\left[f(t_{k+1},Y(t_{k+1}),
Y(t_{k+1-N}))-f(t_{k+1},y_{k+1}, y_{k+1-N})\right]
\frac{\tau}{N}
\\&-\int_{t_k}^{t_{k+1}}\big[f(t_{k+1}, {Y}(t_{k+1}), Y(t_{k+1} -\tau))-f(t,{Y}(t), Y(t-\tau))\big]\,dt,
\end{align*}
where $f(t,y,z)$ is defined in~\eqref{eq:def_f(t,y,z)}. Then, setting
\[
\Delta f_k:=\big[f(t_{k+1},Y(t_{k+1}),
Y(t_{k+1-N}))-f(t_{k+1},y_{k+1}, y_{k+1-N})\big],
\]
and
\[ \widetilde{f}(t,Y)=f(t,{Y}(t), Y(t-\tau)), \quad\text{and then}\quad \widetilde{f}(t_{k+1},Y)=f(t_{k+1}, {Y}(t_{k+1}), Y(t_{k+1} -\tau)),  \]
we have that
\begin{equation*}\label{eq:recursive-form-ek}
e_{k+1}=e_k+\Delta f_k\, \frac{\tau}{N} - \int_{t_k}^{t_{k+1}}\big[\widetilde{f}(t_{k+1}, Y)-\widetilde{f}(t,Y)\big]\,dt.
\end{equation*}

Multiplying both sides by $e_{k+1}$, we obtain
\begin{align*}
e^2_{k+1}=&e_{k+1}e_{k}+e_{k+1}\,\Delta f_k\, \frac{\tau}{N}
-e_{k+1}\int_{t_k}^{t_{k+1}}\big[\widetilde{f}(t_{k+1},Y)-\widetilde{f}(t,Y)\big]\,dt\\
\leq&
\frac{1}{2} e^2_{k+1} +\frac{1}{2} e^2_{k} +e_{k+1}\,\Delta f_k\, \frac{\tau}{N}
-e_{k+1}\int_{t_k}^{t_{k+1}}\big[\widetilde{f}(t_{k+1},Y)-\widetilde{f}(t,Y)\big]\,dt\\
=&\frac{1}{2} e^2_{k+1} +\frac{1}{2} e^2_{k} +e_{k+1} \,\Delta f_k\,  \frac{\tau}{N}+e_{k+1}r_k,
\end{align*}
where
\begin{equation}\label{eq:def_r_k}r_k:=-\int_{t_k}^{t_{k+1}}\big[\widetilde{f}(t_{k+1},Y)-\widetilde{f}(t,Y)\big]\,dt,\end{equation}
is the so-called local error.\\

By~\eqref{eq:FirstInequalityForf} in Lemma~\ref{lem:DisugualianzePerf}, we get that, for $n\geq hN$, i.e., such that $t_n \in[t_0+h\tau,t_0+(h+1)\tau]$
\begin{align*}
0\leq e_n^2=
2\sum_{k=hN}^{n-1}\left(\frac{e^2_{k+1}}{2}-\frac{e^2_k}{2}\right)+e_{hN}^2
\leq
\overline{b} \sum_{k=hN}^{n-1} \frac{Y^2(t_{k+1-N})- y^2_{k+1-N} }{Y(t_{k+1})} e_{k+1}   \,  \frac{\tau}{N}
+2\sum_{k=hN}^{n-1}e_{k+1}r_k+e_{hN}^2.
\end{align*}
Consequently, observing that
\begin{align*}
0\leq e_n^2
\leq&
\overline{b} \sum_{k=hN}^{n-1} \frac{\left|X(t_{k+1-N})- x_{k+1-N}\right|  }{Y(t_{k+1})}|e_{k+1}| \,  \frac{\tau}{N}
+2\sum_{k=hN}^{n-1}|e_{k+1}| |r_k|+e_{hN}^2
 \\
\leq&  \left( \overline{b} \sum_{k=hN}^{n-1} \frac{\left|X(t_{k+1-N})- x_{k+1-N} \right|}{Y(t_{k+1})}   \,  \frac{\tau}{N}
+2\sum_{k=hN}^{n-1} |r_k|\right) \, \sup_{hN\leq j\leq n} |e_{j}|+\sup_{hN\leq j\leq n} |e_{j}|\sup_{(h-1)N\leq j\leq hN} |e_{j}|,
\end{align*}
we obtain
\begin{align*}E^Y_{(h+1)}:=&\sup_{k=hN,\ldots, (h+1)N}|e_k|\leq \overline{b} \sum_{k=hN}^{(h+1)N-1} \frac{\left|X(t_{k+1-N})-x_{k+1-N} \right|}{X^\frac{1}{2}(t_{k+1})}   \frac{\tau}{N}+ 2 \sum_{k=hN}^{(h+1)N-1} \left|r_k\right|+E^Y_{(h)}\\
\leq&\overline{b}\tau \left|E^X_{(h)}\right|\,\frac{1}{N}\,\sum_{k=hN}^{(h+1)N-1}  \frac{1}{X^\frac{1}{2}(t_{k+1})}   + 2 \sum_{k=hN}^{(h+1)N-1} \left|r_k\right|+E^Y_{(h)}.
\end{align*}

Consequently, for any $\epsilon>0$, we obtain
\begin{align*}
\|E^Y_{(h+1)}\|_p\leq&\overline{b} \tau \,\frac{1}{N}\, \sum_{k=hN}^{(h+1)N-1}  \left\| E^X_{(h)}\ \frac{1}{X^{\frac{1}{2}}(t_{k+1})}\right\|_p\,
+ 2 \sum_{k=hN}^{(h+1)N-1}\left\|r_k\right\|_p+\|E^Y_{(h)}\|_p
\\
\leq&\overline{b} \tau \left\|E^X_{(h)}\right\|_{p(1+\epsilon)}\sup_{t\in[t_0,T]} \left\|\frac{1}{X(t)}\right\|^2_{p\frac{1+\epsilon}{2\epsilon}}\, + 2 \sum_{k=hN}^{(h+1)N-1}\left\|r_k\right\|_p+\|E^Y_{(h)}\|_p.
\end{align*}
Since $\overline{b}>0$, we need an upper bound for $\left\|\frac{1}{X(t)}\right\|^2_{p\frac{1+\epsilon}{2\epsilon}}$: By taking into account~\eqref{eq:comparison} in Proposition~\ref{prop:confronto},   inequality~\eqref{eq:exBDb}, for $p\in [1, \frac{2a\underline{\gamma}}{\sigma^2})$, together with equality~\eqref{eq:HKprop2}, for $p\geq  \frac{2a\underline{\gamma}}{\sigma^2}$ in Lemma~\ref{lem:exBD}, we have an upper bound only when $p\frac{1+\epsilon}{2\epsilon} < \frac{2a\underline{\gamma}}{\sigma^2}$. Therefore, taking into account also the assumed condition $\left(\mathcal{I}_{(h)}^X\right)$, we have to choose $\epsilon>0$ such that
\[\begin{cases}
&1<p(1+\epsilon)<\frac{2a\underline{\gamma}}{\sigma^2} \frac{1}{1+\frac{h-1}{2}} \\
&\frac{p}{2}\frac{(1+\epsilon)}{\epsilon}<\frac{2a\underline{\gamma}}{\sigma^2}.
\end{cases}\]
In particular, we can find an $\epsilon>0$  satisfying this system if and only if $p<\frac{2a\underline{\gamma}}{\sigma^2}\frac{1}{1+\frac{h}{2}}$; indeed
 necessarily the function $g_h(\epsilon):=\max \left((1+\epsilon)\frac{h+1}{2},\frac{1}{2}\frac{(1+\epsilon)}{\epsilon}\right)$ has to be  less then $\frac{2a\underline{\gamma}}{\sigma^2 \,p}$, and, since  $g(\epsilon)\geq g(\frac{1}{h+1})=1+\frac{h}{2}$, the best choice is $\epsilon=\frac{1}{h+1}$ and we need to assume that $p(1+\frac{h}{2})<\frac{2a\underline{\gamma}}{\sigma^2}$. With the latter choice of $\epsilon$, and using the assumption~\eqref{eq:HPappErr}   when $h=0$, while using  the first implication in~\eqref{eq:Implication} when $h\geq 1$, we get, with $C =\overline{b} \tau \,C_{p\frac{h+2}{h+1},h}^X$,
\begin{equation}\label{eq:supe_k}
\|E^Y_{(h+1)}\|_p\leq C \,\left(\frac{\tau}{N}\right)^{\frac{1}{2}} + 2 \sum_{k=hN}^{(h+1)N-1}\left\| r_k\right\|_p+C_{p,h}^Y\left(\frac{\tau}{N}\right)^{\frac{1}{2}}\quad\text{for all $p\in\left(1,\frac{2a\underline{\gamma}}{\sigma^2}\frac{1}{1+\frac{h}{2}}\right)$}.
\end{equation}

To achieve the second implication in~\eqref{eq:Implication}, and therefore the thesis,
it is sufficient to prove that
\[\text{for all } p\in \left(1,\frac{2a\underline{\gamma}}{\sigma^2} \right),\text{ there exists a constant $\Phi_p$ such that } \left\| r_k\right\|_p\leq\left(\frac{\tau}{N}\right)^{\frac{3}{2} }\Phi_p^{\frac{1}{p}}.\]
By \eqref{eq:SecondInequalityForf} in Lemma~\ref{lem:DisugualianzePerf} and by the definition \eqref{eq:def_r_k} of local error $r_k$, we have that
\[
|r_k|\leq\int_{t_k}^{t_{k+1}}\left|\widetilde{f}(t_{k+1},Y)-\widetilde{f}(t,Y)\right|\,dt\leq I_1+I_2+I_3,\]
where
\begin{align*}
&I_1:=\frac{1}{Y(t_{k+1})}\int_{t_k}^{t_{k+1}} | \underline{a}(t_{k+1})- \underline{a}(t)| \,dt,\\
&I_2:=\int_{t_k}^{t_{k+1}}|Y(t_{k+1})-Y(t)|\left(\frac{ \underline{a}(t)+ \overline{b}  Y^2(t-\tau) }{Y(t_{k+1})Y(t)}+\overline{a}\right)\,dt,\\
&I_3:=\overline{b}  \int_{t_k}^{t_{k+1}} \frac{1}{Y(t_{k+1})}\,  |Y^2(t_{k+1}-\tau) - Y^2(t -\tau)| \,dt\\
&\phantom{I_3}=\overline{b}  \int_{t_k}^{t_{k+1}} \frac{1}{Y(t_{k+1})}\,  |X(t_{k+1}-\tau) - X(t -\tau)| \,dt.\\
\end{align*}

Consequently, an upper bound for the mean of the local error $r_k$ is given by
\begin{equation}\label{eq:meanr_k}
\mathbb{E}\left[|r_k|\right]\leq(\mathbb{E}\left[|r_k|^p\right])^{\frac{1}{p}}
=\|r_k\|_p\leq\|I_1\|_p+\|I_2\|_p+\|I_3\|_p.
\end{equation}

Now, we determine upper bounds for $\|I_1\|_p$, $\|I_2\|_p$ and $\|I_3\|_p$.

Since the function $\gamma(t)$ satisfies the condition \eqref{eq:holdercond}, and by point \textbf{(ii)} in Assumptions~\ref{ass:InitialValue}, we get
\begin{align*}
\|I_1\|_p^p\leq
&\left(\frac{aL}{2}\right)^p\left(\int_{t_k}^{t_{k+1}}|t_{k+1}-t|^{\frac{1}{2}}\,dt\right)^p\mathbb{E}\left[\frac{1}{Y^p(t_{k+1})}\right]
\leq
\left(\frac{aL}{2}\right)^p\left(\frac{2}{3}\left(\frac{\tau}{N}\right)^{\frac{3}{2}}\right)^p\mathbb{E}\left[\frac{1}{Y^p(t_{k+1})}\right]\\
\leq
&\left(\frac{aL}{3}\right)^p\left(\frac{\tau}{N}\right)^{\frac{3}{2}p}\mathbb{E}\left[\frac{1}{Y^p(t_{k+1})}\right]
\leq \left(\frac{aL}{3}\right)^p\left(\frac{\tau}{N}\right)^{\frac{3}{2}p}\sup_{u\in[t_0,T]}
\mathbb{E}\left[\frac{1}{X^{\frac{p}{2}}(u)}\right]<+\infty, \quad \text{for }\, p<\frac{4a\underline{\gamma}}{\sigma^2}.
\end{align*}

Using H\"{o}lder inequality twice with $p$ and $q>1$ and $q^\prime=\tfrac{q}{q-1}$, we have
\begin{align*}
\|I_2\|_p^p\leq
&\left(\frac{\tau}{N}\right)^{p-1}\int_{t_k}^{t_{k+1}}
\mathbb{E}\left[\left(|Y(t_{k+1})-Y(t)|\left(\frac{\underline{a}(t)+ \overline{b}  Y^2(t-\tau) }{Y(t_{k+1})Y(t)}+\overline{a}\right)\right)^p\right]\,dt\\
\leq
&\left(\frac{\tau}{N}\right)^{p-1}\int_{t_k}^{t_{k+1}}\mathbb{E}\left[|Y(t_{k+1})-Y(t)|^{pq}\right]^{\frac{1}{q}}
\mathbb{E}\left[\left(\frac{\underline{a}(t)+ \overline{b}  Y^2(t-\tau) }{Y(t_{k+1})Y(t)}+\overline{a}\right)^{pq^\prime}\right]^{\frac{1}{q^\prime}}\, dt.\\
\intertext{By point $\emph{1}.$ of Corollary~\ref{cor:meanY}, since $|t_{k+1}-t|\leq \frac{\tau}{N}$, we get}
\|I_2\|_p^p\leq
&c_{pq,1}\left(\frac{\tau}{N}\right)^{p-1}\int_{t_k}^{t_{k+1}}\left(|t_{k+1}-t|^{\frac{pq}{2}}\right)^{\frac{1}{q}}
\mathbb{E}\left[\left(\frac{\underline{a}(t)+ \overline{b}  Y^2(t-\tau) }{Y(t_{k+1})Y(t)}+\overline{a}\right)^{pq^\prime}\right]^{\frac{1}{q^\prime}}\,dt\\
\leq&c_{pq,1}\left(\frac{\tau}{N}\right)^{\frac{3}{2}p-1}\int_{t_k}^{t_{k+1}}
\mathbb{E}\left[\left(\frac{\underline{a}(t)+ \overline{b}  Y^2(t-\tau) }{Y(t_{k+1})Y(t)}+\overline{a}\right)^{pq^\prime}\right]^{\frac{1}{q^\prime}}\,dt.\\
\end{align*}
Applying H\"{o}lder inequality with $\alpha>1$, and $\alpha^\prime=\frac{\alpha}{\alpha-1}$, we get an upper bound for the expectation inside the integral; indeed,
\begin{align*}
&\mathbb{E}\left[\left(\tfrac{\underline{a}(t)+\overline{b}Y^2(t-\tau)}{Y(t_{k+1})Y(t)}\right)^{pq^\prime}\right]\leq
\left(\mathbb{E}\left[(\underline{a}(t)+\overline{b}Y^2(t-\tau))^{\alpha pq^\prime}\right]\right)^{\frac{1}{\alpha}}
\left(\mathbb{E}\left[\left(\tfrac{1}{Y(t_{k+1})Y(t)}\right)^{\alpha^\prime pq^\prime}\right]\right)^{\frac{1}{\alpha^\prime}}\\
&\leq \left(\mathbb{E}\left[\big(\underline{a}(t)+\overline{b}Y^2(t-\tau)\big)^{\alpha pq^\prime}\right]\right)^{\frac{1}{\alpha}}
\left(\tfrac{1}{2}\left(\mathbb{E}\left[\left(\tfrac{1}{Y(t_{k+1})}\right)^{2\alpha^\prime pq^\prime}\right]+\mathbb{E}\left[\left(\tfrac{1}{Y(t)}\right)^{2\alpha^\prime pq^\prime}\right]\right)\right)^{\frac{1}{\alpha^\prime}},\\
\intertext{since $X(t)=Y^2(t)$, by Proposition~\ref{prop:confronto} and by point 1. of Proposition~\ref{prop:p-thmoment}, we have}
&\leq\left(2^{\alpha pq^\prime-1}\left((\underline{a}^*)^{\alpha pq^\prime}+\overline{b}^{\alpha pq^\prime}K_{\alpha p q^\prime}\right)\right)^{\frac{1}{\alpha}}
\left(\tfrac{1}{2}\left(2\sup_{t\in[t_0,T]}
\mathbb{E}\left[\left(\tfrac{1}{\underline{X}(t)}\right)^{\alpha^\prime pq^\prime}\right]\right)\right)^\frac{1}{\alpha^\prime}.
\end{align*}
Since $p<\frac{2a\underline{\gamma}}{\sigma^2}$, we can choose $\alpha^\prime>1$ and $q^\prime>1$ such that
\[\sup_{t\in[t_0,T]}
\mathbb{E}\left[\left(\tfrac{1}{\underline{X}(t)}\right)^{\alpha^\prime pq^\prime}\right]<+\infty,\]
so that, by the point \textbf{(i)} of Assumptions~\ref{ass:InitialValue} and  Lemma~\ref{lem:exBD}, we have that
\begin{align*}
\text{for all $p\in \left(1,\frac{2a\underline{\gamma}}{\sigma^2}  \right)$ there exist constants $\Psi_p$ such that, } \|I_2\|_p^p\leq&\Psi_p\left(\frac{\tau}{N}\right)^{\frac{3}{2}p}.
\end{align*}

Using H\"{o}lder inequality twice and for any $\nu>1$, we have
\begin{align*}
\|I_3\|_p^p\leq&\overline{b}^p\left(\frac{\tau}{N}\right)^{p-1}\int_{t_k}^{t_{k+1}}  \mathbb{E}\left[ \frac{1}{Y^p(t_{k+1})}\, |X(t_{k+1}-\tau) - X(t -\tau)|^p \right] \,dt\\
\leq&\overline{b}^p\left(\frac{\tau}{N}\right)^{p-1}\int_{t_k}^{t_{k+1}}\left(\mathbb{E}\left[\frac{1}{Y^{p\nu}
(t_{k+1})}\right]\right)^{\frac{1}{\nu}}
\left(\mathbb{E}\left[|X(t_{k+1}-\tau) - X(t -\tau)|^{p\nu^\prime} \right]\right)^{\frac{1}{\nu^\prime}}dt.
\intertext{By point $\emph{2}.$ of Proposition~\ref{prop:p-thmoment}, since $|t_{k+1}-t|\leq \frac{\tau}{N}$, we get}
\|I_3\|_p^p\leq&\overline{b}^p\left(\frac{\tau}{N}\right)^{\frac{3}{2}p}C_{p\nu^\prime}
\left(\mathbb{E}\left[\left(\frac{1}{X(t_{k+1})}\right)^{\frac{p\nu}{2}}\right]\right)^{\frac{1}{\nu}}<+\infty\quad\text{for $p\nu<\frac{4a\underline{\gamma}}{\sigma^2}$}.
\end{align*}

Summarizing, by~\eqref{eq:meanr_k}, we have that
 for every $ p\in \left(1,\frac{2a\underline{\gamma}}{\sigma^2}  \right)$, there exists a constant $\Phi_p$ such that $$ \|r_k\|_p \leq \left(\|I_1\|_p +\|I_2\|_p +\|I_3\|_p \right)\leq   \left(\frac{\tau}{N}\right)^{\frac{3}{2} }\Phi_p^{\frac{1}{p}}.$$

Finally, by \eqref{eq:supe_k} we obtain the second implication in~\eqref{eq:Implication},
and we are done.
\end{proof}

\begin{remark}\label{oss:prop:EB_IS_Y}${}$\\
From the proof of the previous Proposition~\ref{prop:EB_IS_Y}, it is clear that, when $b=0$, one can take any value for $\tau$ and in particular one can take $\tau=T-t_0$, so that $\Delta=\frac{T-t_0}{N}$. Furthermore, the assumption on $E^Y_{(0)}$ reduces to~$\big(\mathbb{E}[|Y(t_0)-y_0|^p]\big)^{\frac{1}{p}}\leq C^Y_{p,0} \Delta^{\frac{1}{2}}$, one can consider only  the first (and unique) interval $[t_0, T]$ of size $\tau=T-t_0$. Finally, since $\overline{b}=\frac{b}{2}=0$,  an upper bound for $\left\|\frac{1}{X(t)}\right\|^2_{p\frac{1+\epsilon}{2\epsilon}}$ is not necessary, and  one can obtain the result of~Proposition 3.3.\@ in~\cite{dereich2011euler}. (Actually we get a slight extension, since we do not need to assume  $y_0=Y(t_0)$). The same considerations hold for Theorem~\ref{thm:Teorema1} in the following section.
\end{remark}
\section{Error Bound for the piecewise-linear interpolation $\widehat{X}(t)$}\label{sec:EBDISRES}
In this section we prove the main result of this paper, Theorem~\ref{thm:Teorema1}, i.e., a strong convergence result, extending Theorem $1.1$ in~\cite{dereich2011euler}.
\\

Before stating it we need  to define an intermediate approximation of $X(t)$ that will be used in the proof.\\
Denote by $Z(t)$ the piecewise linear interpolation of the fixed delay CIR process  $X(t)$ defined in~\eqref{eq:CIRdelayX} with initial segment process~\eqref{eq:CIRdelayX-InitialSegment} , with discretization step $\Delta=\Delta t_k=\frac{\tau}{N}$; that is
\begin{equation}\label{eq:PLI}
Z(t)= X(t_k)+ (t-t_k)\,\frac{X(t_{k+1})-X(t_k)}{t_{k+1}-t_k}, \qquad t\in [t_k, t_{k+1}].
\end{equation}

\begin{theorem}\label{thm:Teorema1}${}$\\
Assume the same conditions of Proposition~\ref{prop:EB_IS_Y}, but with condition~\eqref{eq:HPappErr} substituted by the stronger assumption
\begin{equation}\label{eq:HPappErr-X-0}
\left(\textstyle{\mathbb{E}\left[\sup_{t_k \in[t_0-\tau,t_0]}|X_0(t_k)-x_k|^p\right]}\right)^{\frac{1}{p}}\leq C^X_{p,0} \, \Delta^{\frac{1}{2}}, \qquad \text{for all $p\geq 1$.}
\end{equation}
Then,
for all $p\in\left[1,\tfrac{2a\underline{\gamma}}{\sigma^2}\tfrac{2}{1+\big\lceil\tfrac{T-t_0}{\tau}\big\rceil}\right)$, there exists a constant $\Theta_p>0$ such that
\[
\left(\mathbb{E}\left[\textstyle{\sup_{t\in[t_0,T]}} |X(t)-\widehat{X}(t)|^p \right] \right)^{\frac{1}{p}}\leq \Theta_p \left( \Delta \left| \log \left( \Delta\right) \right|\right)^{\frac{1}{2}}.
\]
\end{theorem}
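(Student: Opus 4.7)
The plan is to insert the piecewise-linear interpolation $Z(t)$ of the true process $X(t)$, defined in~\eqref{eq:PLI}, between $X$ and $\widehat{X}$ via the triangle inequality
\[
\sup_{t\in[t_0,T]} |X(t)-\widehat{X}(t)| \;\leq\; \sup_{t\in[t_0,T]} |X(t)-Z(t)| \;+\; \sup_{t\in[t_0,T]} |Z(t)-\widehat{X}(t)|,
\]
and then estimate the two terms separately in $L^p$.

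For the first term, since $Z$ is the chord joining $X(t_k)$ and $X(t_{k+1})$ on each subinterval $[t_k,t_{k+1}]$, we have $\sup_{t\in[t_k,t_{k+1}]}|X(t)-Z(t)|\leq \sup_{s,t\in[t_k,t_{k+1}]}|X(t)-X(s)|$, and therefore
\[
\sup_{t\in[t_0,T]} |X(t)-Z(t)| \;\leq\; w_X(\Delta;[t_0,T]).
\]
The standing Assumptions~\ref{ass:hp_EA} together with~\eqref{eq:IntCondP2} imply condition~\eqref{eq:IntCondEA_HP2-unif} for every $p\geq 1$, so point~\emph{4.} of Proposition~\ref{prop:p-thmoment} applies and yields $\bigl(\mathbb{E}[w_X^p(\Delta;[t_0,T])]\bigr)^{1/p}\leq c_{3,p}\,(\Delta|\log\Delta|)^{1/2}$. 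This is the step providing the $|\log\Delta|^{1/2}$ factor in the final estimate.

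For the second term, both $Z$ and $\widehat{X}$ are piecewise linear on the same grid, and on each interval $[t_k,t_{k+1}]$ one has the convex combination identity
\[
Z(t)-\widehat{X}(t) \;=\; \tfrac{t_{k+1}-t}{\Delta}\bigl(X(t_k)-x_k\bigr) \;+\; \tfrac{t-t_k}{\Delta}\bigl(X(t_{k+1})-x_{k+1}\bigr),
\]
so that $\sup_{t\in[t_0,T]}|Z(t)-\widehat{X}(t)|\leq \sup_{t_k\in[t_0,T]}|X(t_k)-x_k|$. The hypothesis~\eqref{eq:HPappErr-X-0} controls the initial error on $[t_0-\tau,t_0]$ in particular at rate $\Delta^{1/2}$ in $L^p$, and, using $|Y(t_k)-y_k|\,|Y(t_k)+y_k|=|X_0(t_k)-x_k|$ together with the negative moment bounds on $Y(t_0)$ coming from Assumption~\ref{ass:InitialValue}(ii), a H\"older splitting yields condition~\eqref{eq:HPappErr} in Proposition~\ref{prop:EB_IS_Y}. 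That proposition then gives $\bigl(\mathbb{E}[\sup_{t_k\in[t_0,T]}|X(t_k)-x_k|^p]\bigr)^{1/p}\leq C^X\Delta^{1/2}$ for every $p\in\bigl[1,\tfrac{2a\underline{\gamma}}{\sigma^2}\tfrac{2}{1+\lceil(T-t_0)/\tau\rceil}\bigr)$.

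Combining the two estimates by Minkowski's inequality and using $\Delta^{1/2}\leq (\Delta|\log\Delta|)^{1/2}$ for $\Delta$ small, one obtains the theorem with $\Theta_p\leq c_{3,p}+C^X$ (up to adjustment of the constant for large $\Delta$). The decomposition is therefore routine once the two main ingredients from earlier in the paper are in place: the modulus-of-continuity bound for $X$ and the grid-point error bound from Proposition~\ref{prop:EB_IS_Y}. The only nontrivial technical checkpoint will be the verification that the hypothesis~\eqref{eq:HPappErr-X-0} on the initial segment, phrased in $X$-variables, actually implies the $Y$-type hypothesis~\eqref{eq:HPappErr} needed to invoke Proposition~\ref{prop:EB_IS_Y}, which is where the positivity of the initial segment and the negative-moment control enter.
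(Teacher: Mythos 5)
Your decomposition is exactly the one the paper uses: insert the piecewise-linear interpolant $Z(t)$ of \eqref{eq:PLI}, bound $\sup_{t}|X(t)-Z(t)|$ by the modulus of continuity $w_X(\Delta;[t_0,T])$ and invoke point~\emph{4.} of Proposition~\ref{prop:p-thmoment} (this is precisely the content of Lemma~\ref{lem:EBDI}), and bound $\sup_{t}|Z(t)-\widehat{X}(t)|$ by the grid-point error $\sup_{k}|X(t_k)-x_k|$, which Proposition~\ref{prop:EB_IS_Y} controls at rate $\Delta^{1/2}$. The architecture and both main ingredients coincide with the paper's proof.

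The one place you diverge is the verification that \eqref{eq:HPappErr-X-0} implies the $Y$-level hypothesis \eqref{eq:HPappErr}, and there your sketch has a gap. You propose to write $|Y(t_k)-y_k|=|X_0(t_k)-x_k|/(Y(t_k)+y_k)$ and apply H\"older against negative moments of the initial value; but Assumption~\ref{ass:InitialValue}~\textbf{(ii)} only provides negative moments of $X_0$ at the single time $t_0$, whereas your splitting needs something like $\mathbb{E}\bigl[\sup_{t_k\in[t_0-\tau,t_0]}(Y(t_k)+y_k)^{-q}\bigr]<\infty$ over the whole initial segment, together with some lower-bound control on the chosen initial approximations $y_k$, about which nothing is assumed. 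The paper instead uses the elementary inequality $|\sqrt{x}-\sqrt{y}|\leq\sqrt{|x-y|}$, which requires no negative moments at all --- although, to be fair, applied literally that inequality turns the rate $\Delta^{1/2}$ of \eqref{eq:HPappErr-X-0} (used with exponent $p/2$) into only $\Delta^{1/4}$ in \eqref{eq:HPappErr}, so you have correctly identified this as the delicate step of the argument; but as written your fix is not justified by the stated hypotheses. Everything else in your argument matches the paper.
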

\begin{proof}${}$\\
Let $\widehat{X}(t)$ be the linear interpolation of $x_k=y_k^2$ defined in \eqref{eq:LinearInterpolationDNSScheme} and $Z(t)$ be the linear interpolation of $X(t_k)$ defined in \eqref{eq:PLI}. We have
\begin{align*}
\sup_{t\in [t_0,T]}|X(t)-\widehat{X}(t)|&\leq   \sup_{t\in [t_0,T]}|X(t)-Z(t)|+\sup_{t\in [t_0,T]}|Z(t)-\widehat{X}(t)|
\intertext{and therefore}
\left(\mathbb{E}\left[\textstyle{\sup_{t\in[t_0,T]}|X(t)-\widehat{X}(t)|^p}\right]\right)^{\frac{1}{p}}
&
\leq
\left(\mathbb{E}\left[\textstyle{\sup_{t\in[t_0,T]}|X(t)-Z(t)|^p}\right]\right)^{\frac{1}{p}}+
\left(\mathbb{E}\left[\textstyle{\sup_{t\in[t_0,T]}|Z(t)-\widehat{X}(t)|^p}\right]\right)^{\frac{1}{p}}
\\
&\leq \textstyle{ \left(\mathbb{E}\left[\textstyle{\sup_{t\in[t_0,T]}|X(t)-Z(t)|^p}\right]\right)^{\frac{1}{p}}  +
\left(\mathbb{E}\left[\sup_{k\,:\,t_k\in[t_0,T]}|X(t_k)- x_k|^p\right]\right)^{\frac{1}{p}}}.
\end{align*}
 Since $|\sqrt{x}-\sqrt{y}|\leq \sqrt{|x-y|}$, condition~\eqref{eq:HPappErr-X-0} clearly implies condition~\eqref{eq:HPappErr}; therefore
 we can apply Proposition~\ref{prop:EB_IS_Y}, and get that
 $\left(\mathbb{E}\left[\sup_{k\,:\,t_k\in[t_0,T]}|X(t_k)- x_k|^p\right]\right)^{\frac{1}{p}}\leq C^X \Delta^{\frac{1}{2}}$.
 \\ The thesis is then achieved by the inequality
 $$
 \left(\mathbb{E}\left[\textstyle{\sup_{t\in[t_0,T]}|X(t)-Z(t)|^p}\right]\right)^{\frac{1}{p}}\leq  c_p \left( \Delta \left| \log \left( \Delta\right) \right|\right)^{\frac{1}{2}},
 $$
  which is proved in the subsequent Lemma~\ref{lem:EBDI}.
\end{proof}

\begin{lemma}\label{lem:EBDI}${}$\\
Under the same hypotheses of Theorem~\ref{thm:Teorema1}, let $Z(t)$ be the piecewise linear interpolation~\eqref{eq:PLI} of the fixed delay CIR process.
Then,
 for all $p\geq 1$, we have
\begin{equation*}\label{eq:EBDIq}
\left(\mathbb{E}\left[\textstyle{\sup_{t\in[t_0,T]} }|X(t)-Z(t)|^p\right]\right)^\frac{1}{p}\leq c_p\left(\Delta\left|\log\left(\Delta\right)
         \right|\right)^\frac{1}{2}.
\end{equation*}
\end{lemma}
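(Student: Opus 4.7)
The plan is to reduce the whole statement to the modulus-of-continuity estimate already established in point 4 of Proposition~\ref{prop:p-thmoment}. The starting observation is that, on each subinterval $[t_k,t_{k+1}]$, the linear interpolation $Z(t)$ lies on the chord joining $X(t_k)$ and $X(t_{k+1})$; hence for every $t\in[t_k,t_{k+1}]$
\[
|X(t)-Z(t)|\leq |X(t)-X(t_k)|+\frac{t-t_k}{t_{k+1}-t_k}\,|X(t_{k+1})-X(t_k)|\leq 2\,w_X(\Delta;[t_0,T]),
\]
where $w_X(\delta;[t_0,T])$ is the modulus of continuity of $X$. Taking the supremum over $t\in[t_0,T]$ this gives the pointwise bound
\[
\sup_{t\in[t_0,T]}|X(t)-Z(t)|\leq 2\,w_X(\Delta;[t_0,T]).
\]

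Next I would verify that the hypotheses of point 4 of Proposition~\ref{prop:p-thmoment} apply to our current setting. That proposition requires the uniform moment condition~\eqref{eq:IntCondEA_HP2-unif} on the segment process; under the standing Assumptions~\ref{ass:InitialValue}, hypothesis (i) gives $\mathbb{E}[\sup_{t\in[t_0-\tau,t_0]}|X_0(t)|^p]<\infty$ for every $p\geq 1$, which is exactly~\eqref{eq:IntCondEA_HP2-unif}. Therefore for every $p\geq 1$
\[
\bigl(\mathbb{E}[w_X^{p}(\Delta;[t_0,T])]\bigr)^{1/p}\leq c_{3,p}\bigl(\Delta|\log\Delta|\bigr)^{1/2}.
\]

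Combining the pathwise bound with the moment estimate yields
\[
\Bigl(\mathbb{E}\bigl[\textstyle\sup_{t\in[t_0,T]}|X(t)-Z(t)|^{p}\bigr]\Bigr)^{1/p}\leq 2\bigl(\mathbb{E}[w_X^{p}(\Delta;[t_0,T])]\bigr)^{1/p}\leq c_p\bigl(\Delta|\log\Delta|\bigr)^{1/2},
\]
with $c_p:=2c_{3,p}$, which is exactly the claim.

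There is really no technical obstacle here: all the work is hidden inside Proposition~\ref{prop:p-thmoment}(4), which is itself obtained by invoking Theorem~1 of~\cite{fischer2009moments}. The only thing worth double-checking while writing the proof is that the logarithmic modulus estimate is stated for $\delta=\Delta$ (so $\Delta$ must be small enough for $|\log\Delta|$ to be meaningful, which is harmless since the bound is trivially true for $\Delta$ bounded away from $0$), and that the constant $c_{3,p}$ from Proposition~\ref{prop:p-thmoment} does not depend on $\Delta$, so that $c_p$ in the final estimate is an honest constant.
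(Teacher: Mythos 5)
Your proof is correct and follows essentially the same route as the paper: reduce $\sup_{t}|X(t)-Z(t)|$ to the modulus of continuity $w_X(\Delta;[t_0,T])$ and invoke point \emph{4.} of Proposition~\ref{prop:p-thmoment}. The only cosmetic difference is that the paper writes $X(t)-Z(t)$ as a convex combination $\lambda_k(X(t)-X(t_k))+(1-\lambda_k)(X(t)-X(t_{k+1}))$, obtaining the sharper constant $1$ in place of your $2$, which does not affect the statement.
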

\begin{proof}${}$\\
The assertion follows by observing that, for any $t\in [t_k, t_{k+1}]$, $k\geq 0$,
$$
Z(t)=\lambda_k X(t_k) + (1-\lambda_k) X(t_{k+1})\quad  \text{and} \quad X(t)=\lambda_k X(t) + (1-\lambda_k) X(t),
$$
with $\lambda_k= \frac{t_{k+1}-t}{t_{k+1}-t_k}$, and therefore,
\begin{align*}\sup_{t\in[t_0,T]}|X(t)-Z(t)|\leq&\sup_{\overset{t,s\in [t_0,T]:}{|t-s|\leq \frac{\tau}{N}}}|X(t)-X(s)|=:w_X(\tfrac{\tau}{N};[t_0,T]).
\end{align*}
Then, to get the thesis  it is sufficient to recall that  condition \textbf{(i)}~in Assumptions~\ref{ass:InitialValue} implies~\eqref{eq:MODULUS-IntCondEA_HP2-unif} on the $p$-moments of the modulus of continuity $w_X(\delta;[t_0,T])$  (see  \mbox{point~$\emph{4}.$} of Proposition~\ref{prop:p-thmoment}).
\end{proof}

\section{Appendix: Some Results on the classical CIR Model}\label{app:BSP}
In this appendix, we focus our interests on some results related to CIR process with constant long-term value, given by \begin{equation}\label{eq:CIRprocessClassic}
\begin{cases}
d\underline{X}(t)=&a\left(\underline{\gamma}-\underline{X}(t)\right)dt+\sigma\sqrt{\underline{X}(t)}dW(t),\\
\underline{X}(t_0)=&\underline{X}_0,\end{cases}\end{equation}
where $a$, $\underline{\gamma}$ and $\sigma$ are positive constants, and $\underline{X}_0$ is a positive random variable.\\

With the following Lemma~\ref{lem:exBD}, we prove a  generalization of Lemma $A.1$ of~\cite{bossy2007efficient}).

\begin{lemma}\label{lem:exBD}${}$\\
Consider the process $\underline{X}(t)$ defined by~\eqref{eq:CIRprocessClassic}. Assume that  $\frac{2a\underline{\gamma}}{\sigma^2}>1$.
 If
$p\geq \frac{2a\underline{\gamma}}{\sigma^2},$
then
\begin{equation}\label{eq:HKprop2}
\mathbb{E}\left[\frac{1}{\underline{X}^p(t)}\right]=+\infty, \quad \text{for
all $t\geq t_0$},
\end{equation}
while, if
\[
1 \leq p < \frac{2a\underline{\gamma}}{\sigma^2}
\qquad \text{and} \qquad
\mathbb{E}\left[\frac{1}{\underline{X}^p_0}\right]<+\infty,\]
then, there exists a constant $L_p$ such that, for any $t\geq t_0$
\begin{equation}\label{eq:exBDb}\mathbb{E}\left[\frac{1}{\underline{X}^p(t)}\right]\leq L_p\mathit{e}^{apt}\mathbb{E}\left[\frac{1}{\underline{X}^p_0}\right].\end{equation}
Moreover,
\begin{enumerate}
\item if $\frac{2a\underline{\gamma}}{\sigma^2}\geq 2$ and  $p$ is such that $1\leq p\leq \frac{2a\underline{\gamma}}{\sigma^2}-1$, then we can take
\begin{equation*}\label{eq:LpA}L_p=1;\end{equation*}
\item if $\frac{2a\underline{\gamma}}{\sigma^2}\geq 2$ and  $p$ is such that $\frac{2a\underline{\gamma}}{\sigma^2}-1<p<\frac{2a\underline{\gamma}}{\sigma^2}$,  then we can take
\begin{equation}\label{eq:LpB}
L_p=\frac{1}{2^{\frac{2a\underline{\gamma}}{\sigma^2}-p-1}}+
\frac{2^{2p-\frac{2a\underline{\gamma}}{\sigma^2}}p^p\mathit{e}^{-p}}
{\Gamma(p)\left(\frac{2a\underline{\gamma}}{\sigma^2}-p\right)}
= 2^{p+1-\frac{2a\underline{\gamma}}{\sigma^2}} \left(1+
\frac{2^{p-1}p^p\mathit{e}^{-p}}
{\Gamma(p)\big(\frac{2a\underline{\gamma}}{\sigma^2}-p\big)}\right),
\end{equation}
\item if $\frac{2a\underline{\gamma}}{\sigma^2}< 2$, we can take $L_p$ as in \eqref{eq:LpB}.
\end{enumerate}
\end{lemma}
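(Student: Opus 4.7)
The proof rests on the explicit distributional representation of the classical CIR process: conditional on $\underline{X}(t_0) = x_0$, one has $\underline{X}(t) = \beta_t Z_t$, where $\beta_t = \frac{\sigma^2(1-e^{-a(t-t_0)})}{4a}$ and $Z_t$ is non-central chi-square with $d = \frac{4a\underline{\gamma}}{\sigma^2}$ degrees of freedom and non-centrality parameter $\lambda_t$ satisfying $\beta_t \lambda_t = x_0 e^{-a(t-t_0)}$. Equivalently, $Z_t$ admits a Poisson mixture representation: given $K \sim \mathrm{Poisson}(\lambda_t/2)$, $Z_t$ is a central chi-square $\chi^2_{d+2K}$. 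This makes $\mathbb{E}[Z_t^{-p}]$ computable in closed form via the central chi-square moment $\mathbb{E}[(\chi^2_n)^{-p}] = \Gamma(n/2-p)/(2^p\Gamma(n/2))$, valid for $p < n/2$.

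For the divergence statement \eqref{eq:HKprop2} I would show that the density $f_t(\cdot;x_0)$ of $\underline{X}(t)$ behaves like $y^{d/2-1}$ as $y \to 0^+$: this follows directly from the Bessel-function form of the non-central $\chi^2$ density together with $I_{d/2-1}(u) \sim (u/2)^{d/2-1}/\Gamma(d/2)$ as $u \to 0$. Hence $y^{-p}\,f_t(y;x_0)$ is non-integrable near zero whenever $p \geq d/2 = \frac{2a\underline{\gamma}}{\sigma^2}$, and Tonelli gives the divergence of the unconditional negative moment.

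For the easy range $1 \leq p \leq d/2-1$ (item~1, available only when $d \geq 4$), I would apply It\^o's formula to $V(x) = x^{-p}$. A direct calculation gives $\mathcal{L}V(x) = pa\,V(x) + \tfrac{\sigma^2 p}{2}(p+1 - d/2)\,x^{-p-1}$; since $p+1 \leq d/2$, the second summand is non-positive, so after localization at $\tau_n = \inf\{t: \underline{X}(t) \leq 1/n\}$ (which diverge almost surely under the strong Feller condition), one obtains the integral inequality $\mathbb{E}[V(\underline{X}(t\wedge\tau_n))] \leq \mathbb{E}[V(\underline{X}_0)] + pa \int_{t_0}^t \mathbb{E}[V(\underline{X}(s\wedge\tau_n))]\,ds$. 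Gronwall and Fatou then deliver \eqref{eq:exBDb} with $L_p = 1$.

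For the delicate ranges in items~2 and 3, where $d/2-1 < p < d/2$, the It\^o route fails because the $x^{-p-1}$ coefficient becomes positive. Here I would compute $\mathbb{E}[\underline{X}(t)^{-p}\mid\underline{X}_0]$ directly from the Poisson mixture,
\begin{equation*}
\mathbb{E}[\underline{X}(t)^{-p} \mid \underline{X}_0] = (2\beta_t)^{-p} \sum_{k=0}^\infty e^{-\lambda_t/2}\,\frac{(\lambda_t/2)^k}{k!}\,\frac{\Gamma(d/2+k-p)}{\Gamma(d/2+k)},
\end{equation*}
and split this series into its $k=0$ term, which carries the $\Gamma(d/2-p)$ singularity, and the tail $k\geq 1$. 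The $k=0$ contribution, after using $\beta_t\lambda_t = x_0 e^{-a(t-t_0)}$, produces the $2^{p+1-d/2}\,e^{ap(t-t_0)}/\underline{X}_0^p$ piece of the bound. For the tail I would write $\Gamma(d/2+k-p)/\Gamma(d/2+k) = B(p, d/2+k-p)/\Gamma(p)$ as a beta integral, swap sum and integral, recognize the Poisson generating function, and then optimize the resulting expression via the sharp supremum inequality $\sup_{z > 0} z^p e^{-z/2} = (2p)^p e^{-p}$. Since this argument nowhere used $d \geq 4$, case~3 is handled at the same time. The main technical obstacle is precisely this bookkeeping: to obtain $L_p$ in the closed form of~\eqref{eq:LpB} while keeping the overall bound linear in $\mathbb{E}[1/\underline{X}_0^p]$, one must combine the beta-integral representation of the Gamma ratio with the optimal $(2p)^p e^{-p}$ bound and carefully track the prefactor $2^{p+1-d/2}$ coming from the singular $k=0$ term, so that the final estimate matches \eqref{eq:LpB} rather than a larger constant.
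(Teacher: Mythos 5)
Your overall strategy is sound and, for the hard range $\frac{2a\underline{\gamma}}{\sigma^2}-1<p<\frac{2a\underline{\gamma}}{\sigma^2}$, is ultimately equivalent to the paper's: the paper writes $x^{-p}=\Gamma(p)^{-1}\int_0^\infty u^{p-1}e^{-xu}\,du$, inserts the CIR Laplace transform, and after a change of variables arrives at the integral representation \eqref{eq:exBD2}; your plan of writing $\Gamma(d/2+k-p)/\Gamma(d/2+k)$ as a beta integral and resumming the Poisson weights produces exactly the same integral, just starting from the noncentral chi-square series instead of the Laplace transform. Where you genuinely diverge is elsewhere: for the divergence statement \eqref{eq:HKprop2} you argue directly from the $y^{d/2-1}$ behaviour of the transition density at the origin, whereas the paper simply cites Theorem~3.1 of Hurd--Kuznetsov (your argument is self-contained and correct for $t>t_0$; note the lemma's ``for all $t\geq t_0$'' cannot literally include $t=t_0$ for either proof, since $\mathbb{E}[1/\underline{X}_0^p]$ need not be infinite). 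For item~1 you use It\^o's formula on $V(x)=x^{-p}$ plus localization and Gronwall, where the paper just observes that $(1-\tfrac{2}{\zeta}y)^{d/2-p-1}\leq 1$ when the exponent is nonnegative; your generator computation is correct and the argument works, at the cost of needing the localization/martingale justification the paper avoids.

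The one concrete problem is the bookkeeping you yourself flag in items~2 and~3: the split into the $k=0$ term and the $k\geq1$ tail does \emph{not} align with the two summands of \eqref{eq:LpB}. Bounding the $k=0$ term $(2\beta_t)^{-p}e^{-\lambda_t/2}\Gamma(d/2-p)/\Gamma(d/2)$ against $e^{apt}\underline{X}_0^{-p}=(\beta_t\lambda_t)^{-p}$ via $\sup_{z>0}z^pe^{-z/2}=(2p)^pe^{-p}$ yields the constant $p^pe^{-p}\,\Gamma(d/2-p)/\Gamma(d/2)$, which is not $2^{p+1-d/2}$, and the remaining tail estimate will not produce the second summand of \eqref{eq:LpB} either. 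So as described your argument proves the inequality with \emph{some} finite $L_p$, but not with the specific constant asserted in the lemma. The fix is to perform the beta-integral swap on the \emph{entire} series first, recovering \eqref{eq:exBD2}, and then split the integration range $(0,\zeta/2)$ at its midpoint $\zeta/4$: on $(0,\zeta/4)$ the factor $(1-\tfrac{2}{\zeta}y)^{d/2-p-1}$ is at most $2^{p+1-d/2}$, giving the first summand, and on $(\zeta/4,\zeta/2)$ one bounds $y^{p-1}e^{-y}\leq(\zeta/2)^{p-1}e^{-\zeta/4}$, integrates the remaining factor exactly, and applies your $(2p)^pe^{-p}$ bound to get the second summand. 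With that replacement the proof is complete and matches \eqref{eq:LpB}.
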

\begin{remark}\label{oss:exBD}${}$\\
The main difference with Lemma $A.1$ of Bossy and Diop is that in~\cite{bossy2007efficient}, the authors deal only with cases $1.$ and $2.$, without giving an explicit bound to the constants. Moreover, as explained in Remark~\ref{oss:bossy-diop}, one could get the constant $L_p=1$ also under different conditions on $p$ and $\frac{2a\underline{\gamma}}{\sigma^2}$.
Finally, the initial condition~$\underline{X}(t_0)$ is a random variable $\underline{X}_0$, while in~\cite{bossy2007efficient}, the initial condition is a constant.
\end{remark}
\begin{proof}[Proof of Lemma~\ref{lem:exBD}]${}$\\
By the successive conditioning property, we have that
\begin{equation}\label{eq:exBD1}
\mathbb{E}\left[\frac{1}{\underline{X}^p(t)}\right]
=\mathbb{E}\left[\mathbb{E}_{t_0,\underline{X}(t_0)}\left[\frac{1}{\underline{X}^p(t)}\right]\right].
\end{equation}
Then \eqref{eq:HKprop2} is a direct consequence of Theorem 3.1 in~\cite{HurdKuz}.
\\

By the definition of the Gamma function  $\Gamma(p)=\int_0^\infty\,t^{p-1}\mathit{e}^{-t}dt$, and the change of variable $t=ux$, one obtains that $x^{-p}=\frac{1}{\Gamma(p)}\int_0^\infty\,u^{p-1}\mathit{e}^{-xu}du$, and then the following representation holds
\[
\mathbb{E}_{t_0,x}\left[\frac{1}{\underline{X}^p(t)}\right]
=\frac{1}{\Gamma(p)}\int_0^\infty\,u^{p-1}\mathbb{E}_{t_0,x}\left[\mathit{e}^{-u\underline{X}(t)}\right]du.
\]
Since the Laplace transform for CIR model is known (see, e.g.,~\cite{lamberton_lapeyre}), we have that
\[\mathbb{E}_{t_0,x}\left[\frac{1}{\underline{X}^p(t)}\right]
=\frac{1}{\Gamma(p)}\int_0^\infty\,u^{p-1}\frac{1}{\left(2uL(t)+1\right)^{\frac{2a\underline{\gamma}}{\sigma^2}}}
\mathit{e}^{-\frac{uL(t)\zeta(t,x)}{2uL(t)+1}}du,\]
where
\[L(t)=\frac{\sigma^2}{4a}\left(1-\mathit{e}^{-at}\right)\quad\text{and}\quad\zeta(t,x)=\frac{x\mathit{e}^{-at}}{L(t)}.\]
By changing the variable $y=\frac{2uL(t)}{2uL(t)+1}\frac{\zeta(t,x)}{2}$, we get that
\begin{equation}\label{eq:exBD2}
\mathbb{E}_{t_0,x}\left[\frac{1}{\underline{X}^p(t)}\right]
=\frac{1}{\Gamma(p)}\frac{\mathit{e}^{apt}}{x^p}\int_0^{\frac{\zeta(t,x)}{2}}\,y^{p-1}
\left(1-\tfrac{2}{\zeta(t,x)}y\right)^{\frac{2a\underline{\gamma}}{\sigma^2}-p-1}
\mathit{e}^{-y}dy.\end{equation}
\textbf{\emph{1}.}\;
Since $\frac{2a\underline{\gamma}}{\sigma^2}-p-1\geq 0$,  $0<\left(1-\tfrac{2}{\zeta(t,x)}y\right)^{\frac{2a\underline{\gamma}}{\sigma^2}-p-1}\leq1$ for  every $y\in\left(0,\frac{\zeta(t,x)}{2}\right)$, and  consequently we obtain an upper bound for the integral in \eqref{eq:exBD2} as follows
      \[\int_0^{\frac{\zeta(t,x)}{2}}\,y^{p-1}
\left(1-\tfrac{2}{\zeta(t,x)}y\right)^{\frac{2a\underline{\gamma}}{\sigma^2}-p-1}
\mathit{e}^{-y}dy<\int_0^{\frac{\zeta(t,x)}{2}}\,y^{p-1}\mathit{e}^{-y}dy\leq
\int_0^{\infty}\,y^{p-1}
\mathit{e}^{-y}dy=\Gamma(p).\]
Therefore, we have that
\[\mathbb{E}_{t_0,x}\left[\frac{1}{\underline{X}^p(t)}\right]\leq\frac{\mathit{e}^{apt}}{x^p},\]
and, by \eqref{eq:exBD1}, we obtain the inequality~\eqref{eq:exBDb}, with $L_p=1$.\\\\
\textbf{\emph{2}.~and \emph{3}.}\;
  When either $\frac{2a\underline{\gamma}}{\sigma^2}\geq 2$ and  $\frac{2a\underline{\gamma}}{\sigma^2}-p-1< 0$, or   $\frac{2a\underline{\gamma}}{\sigma^2}< 2$, since $1\leq p <\frac{2a\underline{\gamma}}{\sigma^2}$ we are in the case\footnote{Setting $g=\frac{2a\underline{\gamma}}{\sigma^2}$, first of all we observe that, since $p<g$, i.e., $g-p>0$  then clearly $-1<g-p-1$.\\
       When  $g\geq 2$, and $g-p-1<0$ then \eqref{eq:exBD4} immediately follows.
      Similarly, when $1<g<2$ and $1\leq p<g$ then  $g-p-1<0$, indeed $g-p-1<2-p-1=1-p\leq 0$, and therefore, again \eqref{eq:exBD4} immediately follows.  }
       \begin{equation}\label{eq:exBD4}
       -1< \frac{2a\underline{\gamma}}{\sigma^2} -p-1<0, \qquad \text{and} \qquad p-1\geq 0.
       \end{equation}
      We divide the integral in~\eqref{eq:exBD2}
      into two integrals on the  subintervals $\left(0,\frac{\zeta(t,x)}{4}\right)$ and $\left(\frac{\zeta(t,x)}{4},\frac{\zeta(t,x)}{2}\right)$.

Since $-1<\frac{2a\underline{\gamma}}{\sigma^2}-p-1<0$, for $y\in\left(0,\frac{\zeta(t,x)}{4}\right)$, we have that \[1<\left(1-\tfrac{2}{\zeta(t,x)}y\right)^{\frac{2a\underline{\gamma}}{\sigma^2}-p-1}
<\left(\frac{1}{2}\right)^{\frac{2a\underline{\gamma}}{\sigma^2}-p-1}.\]
Consequently, we obtain the following upper bound for the  integral on the first interval
\begin{align*}
&\int_0^{\frac{\zeta(t,x)}{4}}\,y^{p-1}
\left(1-\tfrac{2}{\zeta(t,x)}y\right)^{\frac{2a\underline{\gamma}}{\sigma^2}-p-1}
\mathit{e}^{-y}dy< \int_0^{\frac{\zeta(t,x)}{4}}\,y^{p-1}
\left(\tfrac{1}{2}\right)^{\frac{2a\underline{\gamma}}{\sigma^2}-p-1}
\mathit{e}^{-y}dy
\\&< \int_0^{\infty}\,y^{p-1}
\left(\tfrac{1}{2}\right)^{\frac{2a\underline{\gamma}}{\sigma^2}-p-1}\mathit{e}^{-y}dy
=\left(\tfrac{1}{2}\right)^{\frac{2a\underline{\gamma}}{\sigma^2}-p-1}\Gamma(p).
\intertext{For the integral on the second  interval, taking into account that $p-1\geq 0$, we have that}
&\int_{\frac{\zeta(t,x)}{4}}^{\frac{\zeta(t,x)}{2}}\,y^{p-1}
\left(1-\tfrac{2}{\zeta(t,x)}y\right)^{\frac{2a\underline{\gamma}}{\sigma^2}-p-1}
\mathit{e}^{-y}dy<\left(\tfrac{\zeta(t,x)}{2}\right)^{p-1}\mathit{e}^{-\frac{\zeta(t,x)}{4}}
\int_{\frac{\zeta(t,x)}{4}}^{\frac{\zeta(t,x)}{2}}\,
\left(1-\tfrac{2}{\zeta(t,x)}y\right)^{\frac{2a\underline{\gamma}}{\sigma^2}-p-1}dy
\\
&=\left(\tfrac{\zeta(t,x)}{2}\right)^{p}\mathit{e}^{-\frac{\zeta(t,x)}{4}}\, \int_{\frac{1}{2}}^{1}\,
\left(1-z\right)^{\frac{2a\underline{\gamma}}{\sigma^2}-p-1}dz
= \left(\tfrac{\zeta(t,x)}{2}\right)^{p}\mathit{e}^{-\frac{\zeta(t,x)}{4}}
\frac{1}{\left(\frac{2a\underline{\gamma}}{\sigma^p}-p\right)}
\big(\tfrac{1}{2}\big)^{\frac{2a\underline{\gamma}}{\sigma^p}-p}
\\&<(2p)^p\mathit{e}^{-p}\frac{1}{\left(\frac{2a\underline{\gamma}}{\sigma^p}-p\right)}
\big(\tfrac{1}{2}\big)^{\frac{2a\underline{\gamma}}{\sigma^p}-p},
\end{align*}
since the function $t^p \mathit{e}^{-\frac{t}{2}}$ is uniformly bounded by~$(2p)^p\mathit{e}^{-p}$ in $(0,\infty)$.
Therefore, by~\eqref{eq:exBD2}, we have that
\[\mathbb{E}_{t_0,x}\left[\frac{1}{\underline{X}^p(t)}\right]\leq L_p\frac{\mathit{e}^{apt}}{x^p},\]
where
\[
L_p=\frac{1}{2^{\frac{2a\underline{\gamma}}{\sigma^2}-p-1}}+
\frac{2^{2p-\frac{2a\underline{\gamma}}{\sigma^2}}p^p\mathit{e}^{-p}}
{\Gamma(p)\left(\frac{2a\underline{\gamma}}{\sigma^2}-p\right)},\]
and, by \eqref{eq:exBD1}, we obtain the inequality~\eqref{eq:exBDb}.
\end{proof}

\begin{remark}\label{oss:bossy-diop}${}$\\
We observe that the inequality~\eqref{eq:exBDb}, with $L_p=1$, holds also in the case~$0<p<1$, and $p\leq \frac{2a\underline{\gamma}}{\sigma^2}-1$.
Indeed the proof  of point \emph{1.} can be repeated unchanged.  Moreover the condition $\frac{2a\underline{\gamma}}{\sigma^2}\geq 2$ is not necessary (the latter condition  is necessary since we take $p\geq 1$); for instance, if $p=\frac{1}{2}$, then \eqref{eq:exBDb} holds, with $L_p=1$, under the condition $\frac{3}{2}\leq \frac{2a\underline{\gamma}}{\sigma^2}$.
\end{remark}

We end this section using  Lemma $A.2$ in~\cite{bossy2007efficient}
as in Lemma 3.1 of~\cite{dereich2011euler} (see Lemma~\ref{prop:Lemma3.1DNS}), the only difference being in that our initial condition is random.

\begin{lemma}\label{prop:Lemma3.1DNS}${}$\\
Let $\underline{X}(t)$ be the classical CIR process defined in \eqref{eq:CIRprocessClassic}.
Assume that $\sigma^2< 2a \underline{\gamma}$, $T >t_0$ and
\begin{equation}\label{eq:UX0moments}
\mathbb{E}\left[\frac{1}{\underline{X}_0^{\nu/2}}\right]<+\infty,\quad \text{where $\nu=\frac{2a\underline{\gamma}}{\sigma^2}-1.$}
\end{equation}
Then, for any $q > 0$, there exists a constant $C=C(a,\underline{\gamma},\sigma,\nu, q,T-t_0)$ such that
\[\mathbb{E}\left[\left(\int_{t_0}^T\frac{1}{\underline{X}(t)}dt\right)^q\right]\leq  C \left( 1+ \mathbb{E}\left[ \frac{1}{\underline{X}^{\frac{1}{2}\nu}_0}\right] \right)<+\infty.
\]
\end{lemma}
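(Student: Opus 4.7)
The essential observation is that $\underline{X}_0$ is $\mathcal{F}_{t_0}$-measurable while the Brownian motion $W$ driving the CIR equation is independent of $\mathcal{F}_{t_0}$ (Assumptions~\ref{ass:hp_EA}). So conditionally on $\{\underline{X}_0=x\}$, the process $\underline{X}$ evolves as a classical CIR with deterministic initial value $x$, and the only novelty relative to Lemma~A.2 of~\cite{bossy2007efficient} lies in the randomness of the starting point. My plan is therefore to reduce to a pointwise-in-$x$ estimate of the form
\[
\Phi(q,x):=\mathbb{E}_{t_0,x}\left[\left(\int_{t_0}^T \frac{du}{\underline{X}(u)}\right)^{q}\right] \leq \tilde{C}(q,a,\underline{\gamma},\sigma,T-t_0)\bigl(1 + x^{-\nu/2}\bigr),
\]
and then integrate it against the law of $\underline{X}_0$; by~\eqref{eq:UX0moments} this delivers the announced bound involving $1+\mathbb{E}[\underline{X}_0^{-\nu/2}]$.

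To obtain the pointwise bound, I would follow the route of Bossy-Diop (as used in Lemma~3.1 of~\cite{dereich2011euler}), based on Itô's formula applied to $\ln \underline{X}(t)$. Setting $c:=a\underline{\gamma}-\sigma^2/2>0$ (which is positive by the strong Feller condition), $J:=\int_{t_0}^T \underline{X}(u)^{-1}\,du$ and $M_T:=\int_{t_0}^T \underline{X}(u)^{-1/2}\,dW(u)$, Itô's formula yields
\[
cJ \;=\; \ln \underline{X}(T)-\ln x+a(T-t_0)-\sigma M_T, \qquad \langle M\rangle_T=J.
\]
Raising to the $q$-th power (say $q\geq 1$; the case $q\in(0,1)$ is reached a posteriori by Jensen), taking expectations, and applying the Burkholder-Davis-Gundy inequality to $M_T$ gives a master estimate of the form
\[
\mathbb{E}_x[J^q]\;\leq\;C\Bigl(|\ln x|^q+\mathbb{E}_x\bigl[|\ln \underline{X}(T)|^q\bigr]+1+\mathbb{E}_x\bigl[J^{q/2}\bigr]\Bigr).
\]

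To close this inequality, I would use the elementary bound $|\ln y|^q\leq \kappa_q\bigl(y^{\alpha}+y^{-\nu/2}\bigr)$ (valid for any $y>0$, with $\alpha>0$ arbitrary), combined with Lemma~\ref{lem:exBD} to control the negative moment $\mathbb{E}_x[\underline{X}(T)^{-\nu/2}]$ (legal because $\nu/2<\nu+1$), and the standard CIR positive-moment bound for $\mathbb{E}_x[\underline{X}(T)^{\alpha}]$. Both contributions are of order $1+x^{-\nu/2}$, as is the term $|\ln x|^q$ itself by the same $y$-bound. The self-referential BDG term $\mathbb{E}_x[J^{q/2}]$ is then absorbed by induction on $q$ via dyadic doubling, with base case $q\in(0,1]$ handled by Jensen's inequality, $\mathbb{E}_x[J^q]\leq(\mathbb{E}_x[J])^q$, and Lemma~\ref{lem:exBD} applied at $p=1<\nu+1$. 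Integrating the resulting pointwise estimate and using the concavity inequality $\mathbb{E}[\underline{X}_0^{-\alpha}]\leq 1+\mathbb{E}[\underline{X}_0^{-\nu/2}]$ for $\alpha\in[0,\nu/2]$ (Jensen on $z\mapsto z^{2\alpha/\nu}$) completes the proof.

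The main obstacle is closing the self-referential BDG bound, which forces the dyadic induction; and one must choose the exponent in the Young-type bound on $|\ln y|^q$ equal to exactly $\nu/2$, so that the negative moment appearing after integration against the law of $\underline{X}_0$ matches the hypothesis~\eqref{eq:UX0moments}.
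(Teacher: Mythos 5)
Your reduction to a pointwise-in-$x$ estimate followed by integration against the law of $\underline{X}_0$ is exactly what the paper does; but the paper obtains the pointwise bound by simply citing the exponential-moment estimate of Lemma~A.2 in~\cite{bossy2007efficient}, namely $\sup_{t\in[t_0,T]}\mathbb{E}_{t_0,x}\bigl[e^{\frac{\nu^2\sigma^2}{8}\int_{t_0}^t \underline{X}(s)^{-1}ds}\bigr]\leq c\,(1+x^{-\nu/2})$, from which all polynomial moments of $J$ follow at once. Your attempt to re-derive a polynomial bound via It\^o on $\ln\underline{X}$ plus BDG has a genuine gap. In the identity $cJ=\ln\underline{X}(T)-\ln x+a(T-t_0)-\sigma M_T$ the term $\ln\underline{X}(T)$ enters with a \emph{positive} sign, so bounding $J$ from above forces you to control $(\ln\underline{X}(T))_+$, hence a positive moment $\mathbb{E}_{t_0,x}[\underline{X}(T)^{\alpha}]$ with some $\alpha>0$ (no bound $|\ln y|^q\leq\kappa(y^{\alpha}+y^{-\nu/2})$ can have $\alpha=0$). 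Since $\mathbb{E}_{t_0,x}[\underline{X}(T)^{\alpha}]$ grows like $x^{\alpha}$ as $x\to\infty$, your claim that ``both contributions are of order $1+x^{-\nu/2}$'' is false, the asserted pointwise bound $\Phi(q,x)\leq\tilde C(1+x^{-\nu/2})$ fails for large $x$, and after integration you need $\mathbb{E}[\underline{X}_0^{\alpha}]<\infty$ --- a hypothesis that is not in the lemma (it only assumes a negative moment) and does not appear in its conclusion. The exponential-supermartingale device of Bossy--Diop, applied with the negative parameter $\mu=-\nu\sigma/2$ in $\mathcal{E}(\mu M)$, is precisely what converts $\ln\underline{X}(T)$ into the harmless weight $\underline{X}(T)^{-\nu/2}$ and $-\ln x$ into $x^{\nu/2}$, so that no positive moment is ever needed; this is the step your polynomial route cannot reproduce.

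A second, independent problem is the base case of your dyadic induction. Jensen gives $\mathbb{E}_{t_0,x}[J^q]\leq(\mathbb{E}_{t_0,x}[J])^q$, and Lemma~\ref{lem:exBD} at $p=1$ only yields $\mathbb{E}_{t_0,x}[\underline{X}(u)^{-1}]\leq L_1 e^{a(u-t_0)}x^{-1}$, hence $\mathbb{E}_{t_0,x}[J]\leq Cx^{-1}$ and $\mathbb{E}_{t_0,x}[J^q]\leq Cx^{-q}$. Whenever $q>\nu/2$ (in particular for $q=1$ as soon as $\nu<2$, i.e.\ $\sigma^2>\tfrac{2}{3}a\underline{\gamma}$) this singularity at $x=0$ is strictly worse than the target $x^{-\nu/2}$, so the induction does not start. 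The improvement from $x^{-1}$ to $x^{-\nu/2}$ comes from the time integration (the process escapes from $0$ quickly), and again it is the exponential-martingale computation, not termwise negative-moment bounds, that captures it. If you want to avoid citing Bossy--Diop you would have to reproduce their supermartingale argument; the BDG-plus-logarithmic-moments route does not close.
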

\begin{proof}${}$\\
When the initial condition $\underline{X}_0$ is deterministic and equal to $x_0$, Lemma $A.2$ in~\cite{bossy2007efficient} guarantees that  there exists a constant $c=c(a,\gamma,\sigma,\nu,T-t_0)$ such that
\begin{equation*}
\sup_{t\in[t_0,T]}\mathbb{E}\left[\mathit{e}^{\frac{\nu^2\sigma^2}{8}\int_{t_0}^t\frac{1}{\underline{X}(s)}\,ds}\right]\leq
c\left(1+x^{-\frac{\nu}{2}}_0\right),
\end{equation*}
and condition~\eqref{eq:UX0moments} allows to extend it to random initial conditions.
\end{proof}


\end{document}